%
%
%



\documentclass[a4paper]{amsart}

\usepackage{amsmath,amscd,amsthm,amssymb,xypic}
\usepackage{color}
\usepackage{tikz-cd}
\usepackage{tikz}
\usepackage{enumerate}
\setcounter{tocdepth}{1}
\usepackage{relsize}
\usepackage{comment}
\usepackage{mathtools}

\newtheorem{theo}{Theorem}[section]
\newtheorem{lemm}[theo]{Lemma}
\newtheorem{prop}[theo]{Proposition}

\theoremstyle{definition}
\newtheorem{defi}[theo]{Definition}
\newtheorem{rema}[theo]{Remark}

\newtheorem{exam}[theo]{Example}

\newcommand{\bma}{\left(\begin{matrix}}
\newcommand{\ema}{\end{matrix}\right)}

 \usepackage[percent]{overpic}


\newcommand{\CC}{\mathbb{C}}

\newcommand{\QQ}{\mathbb{Q}}
\newcommand{\RR}{\mathbb{R}}

\newcommand{\TT}{\mathbb{T}}

\newcommand{\ZZ}{\mathbb{Z}}
\newcommand{\Aa}{\mathcal{A}}

\newcommand{\Ff}{\mathcal{F}}
\newcommand{\Gg}{\mathcal{G}}
\newcommand{\Hh}{\mathcal{H}}

\newcommand{\kk}{\mathbf{k}}

\newcommand{\del}{\partial}
\newcommand{\delb}{{\bar \partial}}

\newcommand{\lra}{\longrightarrow}

\renewcommand{\l}{\ell}

\newcommand{\ov}{\overline}

\newcommand{\Ker}{\mathrm{Ker}}

\newcommand{\Img}{\mathrm{Im}}
\newcommand{\Dec}{\mathrm{Dec}}

\begin{document}

\title{Filtered $A$-infinity structures in complex geometry}

\author{Joana Cirici}
\address{Departament de Matem\`{a}tiques i Inform\`{a}tica, Universitat de Barcelona,
Gran Via 585,
08007 Barcelona, Spain /
Centre de Recerca Matem\`{a}tica, Edifici C, Campus Bellaterra, 08193
Bellaterra, Spain
}

\email{jcirici@ub.edu}

\thanks{J. Cirici acknowledges the Serra H\'{u}nter Program. Her work was also partially funded by the 
Spanish State Research Agency (Mar\'{i}a de Maeztu Program CEX2020-001084-M and I+D+i project PID2020-
117971GB-C22/MCIN/AEI/10.13039/501100011033) as well as by the French National Research Agency
(ANR-20-CE40-0016). 
}

\author{Anna Sopena-Gilboy}
\address{Departament de Matem\`{a}tiques i Inform\`{a}tica,  Universitat de Barcelona\\
Gran Via 585\\
08007 Barcelona, Spain }
\email{asopenagilboy@ub.edu}

\subjclass[2020]{55P15, 53C15, 32S35}

\date{}

\dedicatory{}

\commby{}


\begin{abstract}
We prove a filtered version of the Homotopy Transfer Theorem which gives an $A$-infinity algebra structure on any page of the spectral sequence associated to a filtered dg-algebra. We then develop various applications to the study of the geometry and topology of complex manifolds, using the Hodge filtration, as well as to complex algebraic varieties, using mixed Hodge theory.
\end{abstract}

\maketitle

\section{Introduction}
On a complex manifold, the Hodge filtration gives a spectral sequence connecting Dolbeault cohomology (a geometric invariant) with de Rham cohomology (a topological invariant). In particular, this spectral sequence carries both geometric and topological information. The algebra structure on the de Rham complex induces naturally an algebra structure on any page of the associated spectral sequence. However, this structure is not topological, not even at the $E_\infty$-page. The main motivation of the present work is to understand how the
topological algebraic structure may
be recovered at any page of the associated spectral sequence.

Given a cochain complex defined over a field, there is always a homotopy transfer diagram connecting it with its cohomology. 
By Kadeishvili's theory \cite{Kade}, when the complex is a dg-algebra, this diagram induces a natural $A_\infty$-structure on its cohomology with trivial differential, which is unique up to isomorphism. This $A_\infty$-structure is strongly related to Massey products and serves as a powerful homotopical invariant of the original dg-algebra.
A first main result of this paper is Theorem \ref{Kadeishvili1filtrat}, which gives a homotopy transfer diagram connecting a filtered cochain complex with the first stage of its associated spectral sequence. This induces naturally on $E_1$ an $A_\infty$-structure unique up to isomorphism. Although the differential of this $A_\infty$-structure is not trivial, it satisfies a certain minimality condition in the filtered setting and in Theorem \ref{r0} we show that every filtered dg-algebra has a filtered minimal $A_\infty$-model.
The existence and uniqueness of filtered minimal models is brought to any page of the associated spectral sequence in Theorem \ref{higherr}.
All results are also true in the commutative and Lie settings, giving $C_\infty$ and $L_\infty$ structures on spectral sequences when the initial filtered complex has a compatible structure of a commutative or a Lie dg-algebra.

The interaction between spectral sequences and $A_\infty$-structures was previously studied by Lapin (see \cite{Lapinprimer} and related works) and Herscovich \cite{Herscovich} via formal deformation theory. Here we propose a different strategy, mimicking the approach of 
Markl \cite{Markl2:06:geophys}.
This allows us to control the behavior of higher operations with respect to filtrations as well as to inductively compute filtered $A_\infty$-models on various examples of geometric origin.

We apply the above filtered homotopy theory to the Hodge filtration on the complex de Rham algebra of a complex manifold. We compute new geometric-topological invariants for complex manifolds and study multiplicative structures on the Fr\"{o}licher spectral sequence. We introduce and compare \textit{Dolbeault $A_\infty$-structures} and \textit{Hodge-de Rham $A_\infty$-structures} on various examples.
As is well-known, on compact K\"{a}hler manifolds the Fr\"{o}licher spectral sequence degenerates always at the first stage and the higher multiplicative structures on cohomology are trivial as a consequence of the Formality Theorem of \cite{DGMS}. In a broader study of this phenomenon, we consider multiplicative structures on mixed Hodge complexes, functorially defined for any complex algebraic variety. We prove strictness of higher operations with respect to both the Hodge and weight filtrations, as well as a ``purity implies formality'' statement in the $A_\infty$-context, which applies to complex algebraic varieties whose weight filtration is pure.

\subsection*{Acknowledgments}
We would like to thank Nero Budur for encouraging us to develop Section \ref{SecMHT} and Daniele Angella, Geoffroy Horel, Muriel Livernet and Scott Wilson for useful discussions. Thanks also to the referee for the comments.

\section{Filtered homotopy transfer diagrams}

We will consider filtered complexes $(A,d,F)$ defined over a field $\kk$, with bounded below and exhaustive decreasing filtrations:
\[0\subset\cdots\subseteq F^{p+1}A\subseteq F^pA\subseteq \cdots\subseteq A\quad\text{ with }\quad
d(F^pA^n)\subseteq F^pA^{n+1}.\]
Recall that the differential $d$ of $A$ is said to be \textit{strictly compatible with filtrations}
if \[d(F^pA)=F^pA\cap dA.\]
This happens if and only if its associated spectral sequence degenerates at $E_1$. 


\begin{defi}
 A morphism of filtered complexes $f:A\to B$ is said to be a \textit{filtered quasi-isomorphism} if the restriction 
 $F^pf:F^pA\to F^pB$
 is a quasi-isomorphism for all $p$, so that $H^*(F^pf)$ is an isomorphism.
\end{defi}

\begin{rema}\label{equivdefs}
The condition that $H^*(F^pf)$ is an isomorphism for all $p$ is equivalent to asking that the \textit{filtered mapping cone} $(F^pC(f),D)$ of $f$, given by
\[F^pC^n(f)=F^pA^{n+1}\oplus F^pB^n\quad\text{with}\quad D(a,b)=(-da,-fa+db)\]
is acyclic: $H^n(F^pC(f))=0$ for all $p$ and $n$.
For complexes with bounded below filtrations, this is equivalent to asking that
$E_1^{p,q}(f)\cong H^{p+q}(Gr^p_Ff)$ is an isomorphism for all $p,q$ or, equivalently, that $Gr^p_FC(f)$ is acyclic for all $p$.
\end{rema}

\begin{defi}
Let $f,g:(A,d,F)\rightarrow (B,d,F)$ be two morphisms of filtered complexes. 
A \textit{filtered homotopy} from $f$ to $g$ is a map $h:A\rightarrow B[-1]$ such that
\[h(F^pA^n)\subseteq F^{p}B^{n-1}\quad\text{for all $p$ and all $n$, and}\quad hd+dh=f-g.\]
\end{defi}

Filtered homotopies behave as expected with respect to spectral sequences:

\begin{lemm}Let $h$ be a filtered homotopy between two maps of filtered complexes $f,g:(A,d,F)\to (B,d,F)$. The induced maps $f_r,g_r:E_r(A)\rightarrow E_r(B)$ satisfy \[f_r=g_r\quad\text{ for }\quad r>0\quad\text{ and }\quad f^*=g^*:H^*(A)\rightarrow H^*(B).\]
\end{lemm}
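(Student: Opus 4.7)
The plan is to apply the homotopy identity $f - g = hd + dh$ directly to representatives of classes on each page, exploiting that a filtered homotopy in our sense preserves the filtration strictly, without any shift. Both statements then follow from a careful bookkeeping of filtration degrees.

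For the cohomology assertion, which serves as a warm-up, let $x \in A$ be a cocycle. Then $(f-g)(x) = h(dx) + d(h(x)) = d(h(x))$, a coboundary in $B$, so $f^* = g^*$ on $H^*(A)$.

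For $f_r = g_r$ on $E_r$ with $r \geq 1$, I would work with the standard presentation
\[ E_r^{p,q}(A) \;=\; Z_r^{p,q}(A) \,\big/\, \bigl(Z_{r-1}^{p+1,q-1}(A) + d\,Z_{r-1}^{p-r+1,q+r-2}(A)\bigr), \]
where $Z_r^{p,q}(A) = \{x \in F^p A^{p+q} \mid dx \in F^{p+r}A^{p+q+1}\}$. Given a representative $x \in Z_r^{p,q}(A)$, the first summand $h(dx)$ lies in $F^{p+r}B \subseteq F^{p+1}B$ since $r \geq 1$, and therefore vanishes modulo $Z_{r-1}^{p+1,q-1}(B)$. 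The second summand $d(h(x))$ is the coboundary of $h(x) \in F^pB \subseteq F^{p-r+1}B$; verifying that $h(x) \in Z_{r-1}^{p-r+1,q+r-2}(B)$ amounts to checking $d(h(x)) \in F^pB$, which follows from $d(h(x)) = (f-g)(x) - h(dx)$, since both $(f-g)(x)$ and $h(dx)$ sit in $F^pB$. Hence $(f_r-g_r)([x]) = 0$.

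No serious obstacle arises: the proof is essentially a routine tracking of filtration indices, and the whole content lies in the defining property that a filtered homotopy does not shift the filtration, so that the two terms of $hd+dh$ land respectively in the higher-filtration and the $d$-exact parts of the quotient defining $E_r(B)$.
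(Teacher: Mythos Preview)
Your argument is correct. The paper itself states this lemma without proof, treating it as a standard fact, so there is no approach to compare against. Your direct verification using the presentation $E_r^{p,q} = Z_r^{p,q}/(Z_{r-1}^{p+1,q-1} + dZ_{r-1}^{p-r+1,q+r-2})$ is exactly the kind of routine check the authors are implicitly leaving to the reader. One small point of presentation: when you say $h(dx)$ ``vanishes modulo $Z_{r-1}^{p+1,q-1}(B)$'', what you mean (and what is true) is that $h(dx)$ actually \emph{lies in} $Z_{r-1}^{p+1,q-1}(B)$; this holds because $h(dx)\in F^{p+r}B$ and $d$ preserves the filtration, so $d(h(dx))\in F^{p+r}B$ automatically. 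With that clarified, both summands of $(f-g)(x)=h(dx)+d(h(x))$ land in the denominator, and the argument is complete.
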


In particular, every filtered homotopy equivalence is a filtered quasi-isomorphism.
We now have all the ingredients for a homotopy transfer in the filtered context:

\begin{defi}\label{deffilHTD}
A \textit{filtered homotopy transfer diagram} is given by the data
\[
\begin{tikzcd}[ampersand replacement = \&]
(A,d,F) \arrow[r, rightarrow, shift left, "f"] \arrow[r, shift right, leftarrow, "g" swap] \arrow[loop left, "h"] \& (B,d,F)
\end{tikzcd}\quad\quad\text{ where:}
\]
\begin{enumerate}[(1)]
    \item $(A,d,F)$ and $(B,d,F)$ are filtered complexes,
    \item $f$ and $g$ are morphisms of filtered complexes, and
    \item $h$ is a filtered homotopy from  the identity to $gf$.
\end{enumerate}
\end{defi}

We next recall the classical result on the existence of homotopy transfer diagrams for complexes of vector spaces
(see for instance Lemma 9.4.7 in \cite{LV}). We include the proof for completeness and as a comparison with the filtered version.
 
 \begin{theo}\label{Kadeclassic}
 Let $(A,d)$ be a complex of vector spaces. There is a homotopy transfer diagram  
\[
\begin{tikzcd}[ampersand replacement = \&]
(A,d) \arrow[r, rightarrow, shift left, "\rho"] \arrow[r, shift right, leftarrow, "\iota" swap] \arrow[loop left, "k"] \& (H^*(A),0)\quad\text{with}\quad \rho \iota=Id.
\end{tikzcd}
\]
\end{theo}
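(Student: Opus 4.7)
The plan is to exploit the fact that every short exact sequence of vector spaces splits, in order to produce explicit splittings of $A$ into cocycles, coboundaries, and a complementary subspace, and then read off $\iota$, $\rho$, and $k$ directly from these splittings.

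Concretely, in each degree $n$ I would choose a decomposition
\[ A^n = L^n \oplus H^n \oplus B^n, \]
where $B^n = d(A^{n-1})$ is the space of coboundaries, $H^n$ is a chosen complement of $B^n$ inside the cocycles $Z^n = \ker(d\colon A^n \to A^{n+1})$, and $L^n$ is a chosen complement of $Z^n = H^n \oplus B^n$ in $A^n$. By construction $H^n \cong H^n(A)$ and the restriction $d|_{L^n}\colon L^n \to B^{n+1}$ is a linear isomorphism.

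With these choices in place, I would define $\iota\colon H^*(A) \to A$ as the inclusion of $H^n$ into $A^n$, define $\rho\colon A \to H^*(A)$ as the projection onto $H^n$ along $L^n \oplus B^n$, and define $k\colon A^n \to A^{n-1}$ to vanish on $H^n \oplus L^n$ and to agree with $(d|_{L^{n-1}})^{-1}\colon B^n \to L^{n-1}$ on $B^n$. Then $\iota$ is a chain map because $H^n$ consists of cocycles, $\rho$ is a chain map because $d$ sends $L^n \oplus B^n$ into $B^{n+1}$ which is killed by the projection to $H^{n+1}$, and the identity $\rho \iota = \mathrm{Id}$ is immediate.

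The remaining verification is the homotopy identity $kd + dk = \mathrm{Id} - \iota \rho$, which I would check by evaluating both sides on each of the three summands. On $H^n$ both $dk$ and $kd$ vanish while $\mathrm{Id} - \iota \rho$ is zero; on $L^n$ the term $kd$ recovers the element via $k \circ d|_{L^n} = \mathrm{Id}_{L^n}$, while the other terms vanish; and on $B^n$ the term $dk$ recovers the element via $d|_{L^{n-1}} \circ k|_{B^n} = \mathrm{Id}_{B^n}$. There is no real obstacle here: the entire argument rests on the existence of vector-space splittings, which is automatic over a field $\kk$. The reason for spelling out this classical statement in the paper is presumably to fix notation and to set up the pattern that the filtered analogue will imitate, where the delicate point will be whether such splittings can be arranged to be compatible with the filtration $F$.
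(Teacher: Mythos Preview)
Your proof is correct and follows essentially the same approach as the paper: both split $A^n$ via the two short exact sequences involving $B^n$, $Z^n$, and $H^n$, and read off $\iota$, $\rho$, $k$ from the resulting three-term decomposition. The only cosmetic difference is that the paper identifies your complement $L^n$ directly with $B^{n+1}(A)$ via the isomorphism $d|_{L^n}$, writing $A^n = B^n(A)\oplus H^n(A)\oplus B^{n+1}(A)$ and defining $k(b,a,b')=(0,0,b)$; your more explicit verification of the homotopy identity is a welcome addition.
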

\begin{proof}
Consider the short exact sequences 
\[0\rightarrow B^n(A)\rightarrow Z^n(A)\rightarrow H^n(A)\rightarrow 0\text{ and }
0\rightarrow Z^n(A)\rightarrow A^n\rightarrow B^{n+1}(A)\to 0
\] 
Choosing splittings of both sequences we obtain a decomposition
\[A^n=B^n(A)\oplus H^n(A)\oplus B^{n+1}(A).\]
We let $\iota(a):=(0,a,0)$, $\rho(b,a,b')=a$ and 
$k(b,a,b')=(0,0,b).$
\end{proof}

For the filtered version, we will replace the cohomology of a complex by the first page of the associated spectral sequence, together with a possibly non-trivial differential that will be defined inductively over decreasing weights. 

\begin{theo}
 Let $(A,d,F)$ be a filtered complex. There is a filtered homotopy transfer diagram  
\[
\begin{tikzcd}[ampersand replacement = \&]
(A,d,F) \arrow[r, rightarrow, shift left, "f"] \arrow[r, shift right, leftarrow, "g" swap] \arrow[loop left, "h"] \& (M,d,F)
\end{tikzcd}
\]
where $(M,d,F)$ is a filtered complex such that
\[F^pM^n\cong \bigoplus_{q\geq p}E^{q,n-q}_1(A)\text{ and }d(F^pM)\subseteq F^{p+1}M.\]
Furthermore, the maps $f$ and $g$ are filtered quasi-isomorphisms.
\label{Kadeishvili1filtrat}
\end{theo}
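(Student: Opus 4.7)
The plan is to reduce the theorem to the classical case Theorem \ref{Kadeclassic} applied to the associated graded pieces, and then transfer the result back to $A$ via the homological perturbation lemma. First, I would choose vector-space splittings $s_p : Gr^p_F A \to F^p A$ of the quotient projections. These assemble into a graded vector space isomorphism $A \cong \bigoplus_p Gr^p_F A$ restricting to $F^p A \cong \bigoplus_{q \geq p} Gr^q_F A$. Under this identification, $d$ decomposes as $d = d_0 + \delta$, where $d_0$ preserves filtration weight (and is the associated-graded differential on each piece) while $\delta = d_1 + d_2 + \cdots$ strictly raises filtration. From $d^2 = 0$ one obtains $d_0^2 = 0$ and $d_0 \delta + \delta d_0 + \delta^2 = 0$, so that $\delta$ is a perturbation of $d_0$.

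Next, apply Theorem \ref{Kadeclassic} to each $(Gr^p_F A, d_0)$ and sum over $p$ to obtain a strictly filtration-preserving transfer diagram
\[
\begin{tikzcd}[ampersand replacement=\&]
(A, d_0) \arrow[r, shift left, "\rho"] \arrow[loop left, "k"] \& (M, 0) \arrow[l, shift left, "\iota"]
\end{tikzcd}
\]
with $\rho\iota = Id$ and $Id - \iota\rho = d_0 k + k d_0$, where $M^n = \bigoplus_p E_1^{p,n-p}(A)$ with the prescribed filtration. Then the homological perturbation lemma replaces $d_0$ by $d = d_0 + \delta$, yielding
\[ g = (1-k\delta)^{-1}\iota, \quad f = \rho(1-\delta k)^{-1}, \quad h = k(1-\delta k)^{-1}, \quad d_M = \rho\delta(1-k\delta)^{-1}\iota. \]
Each series terminates in finitely many steps on any fixed element: since $\delta$ raises filtration by at least one while $\iota, \rho, k$ preserve it, iterating $r$ times lands in $F^{p+r}$, which vanishes for $r$ large by the bounded-below hypothesis. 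Because every term of $d_M$ contains $\delta$ at least once, $d_M(F^p M) \subseteq F^{p+1} M$, as required, and the standard perturbation-lemma identities give that $f,g$ are chain maps and $h$ is a homotopy between $Id_A$ and $gf$.

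Finally, I would verify that $f$ and $g$ are filtered quasi-isomorphisms. Modulo $F^{p+1}$, all higher-order terms of the perturbation series for $f$ and $g$ vanish, so on each associated graded piece they restrict to the classical Kadeishvili maps $\bar\rho_p$ and $\bar\iota_p$, which are quasi-isomorphisms. Hence $Gr^p_F f$ and $Gr^p_F g$ are quasi-isomorphisms for all $p$, which by Remark \ref{equivdefs} (combined with the bounded-below hypothesis) implies that $F^p f$ and $F^p g$ are quasi-isomorphisms for all $p$. The main obstacle I anticipate is the careful bookkeeping of filtration degrees throughout the perturbation series, and in particular verifying convergence; both points rest squarely on the bounded-below hypothesis on the filtration. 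An inductive alternative, in the spirit of Markl's approach mentioned in the introduction, would extend the transfer datum from $F^{p+1}A$ to $F^p A$ one weight at a time using Theorem \ref{Kadeclassic} on $Gr^p_F A$; I prefer the perturbation-lemma route because it produces $d_M$, $f$, $g$, and $h$ in closed form and sidesteps the inductive obstruction-theoretic bookkeeping.
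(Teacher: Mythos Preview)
Your approach is correct and genuinely different from the paper's. The paper proceeds by downward induction on the filtration index: assuming the transfer datum $(f_q,g_q,h_q)$ has been built on $F^qA$ for all $q>p$, it forms the auxiliary complex $Q_p^n=M_{p+1}^{n+1}\oplus F^pA^n$ with differential $D(m,a)=(-dm,-g_{p+1}(m)+da)$, applies Theorem~\ref{Kadeclassic} to $(Q_p,D)$, and reads off the extension of $d$, $g$, $f$, $h$ to the next step; a five-lemma argument then shows $g_p$ is a quasi-isomorphism. You instead split $A\cong\bigoplus_p Gr^p_FA$, apply Theorem~\ref{Kadeclassic} to the associated graded all at once, and invoke the homological perturbation lemma with perturbation $\delta=d-d_0$. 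This is exactly the ``deformation-theoretic'' route the authors allude to in the introduction (attributed to Lapin and Herscovich) and deliberately set aside in favor of Markl's inductive scheme. The trade-off is as you describe: your version yields closed formulas for $d_M,f,g,h$ and makes the verification of $d(F^pM)\subseteq F^{p+1}M$ and of filtered quasi-isomorphism immediate on the associated graded, whereas the paper's inductive construction is tailored to the later hands-on computations of filtered $A_\infty$-models on explicit complex manifolds. One small caution: your convergence argument for $(1-k\delta)^{-1}$ needs that for each fixed element, $F^{p+r}$ is eventually zero; this is the same finiteness the paper's downward induction needs for its base case, so you are not assuming more than the authors do, but be careful that ``bounded below'' for a decreasing filtration is sometimes read as $F^{p_0}A=A$ rather than $F^{p_0}A=0$---make sure you invoke the correct half of the boundedness hypothesis.
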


\begin{proof}Assume that for all $q>p$ we have defined a 
complex $(M_q,d)$ fitting into a homotopy transfer diagram of the form 
\[
\begin{tikzcd}[ampersand replacement = \&]
(F^qA,d) \arrow[r, rightarrow, shift left, "f_q"] \arrow[r, shift right, leftarrow, "g_q" swap] \arrow[loop left, "h_q"] \& (M_q,d)
\end{tikzcd}
\]
where $g_q$ is a quasi-isomorphism and $d(M_q)\subseteq M_{q+1}\subseteq M_q$.
Consider the complex $(Q_p,D)$ given by 
$Q_p^n:=M_{p+1}^{n+1}\oplus F^pA^n$ with $D(m,a):=(-dm,-g_{p+1}(m)+da).$
Apply Theorem \ref{Kadeclassic} to this complex to get a homotopy transfer diagram
\[
\begin{tikzcd}[ampersand replacement = \&]
(Q_p,D) \arrow[r, rightarrow, shift left, "\rho"] \arrow[r, shift right, leftarrow, "\iota" swap] \arrow[loop left, "k"] \& (H^*(Q_p),0)
\end{tikzcd}\quad\text{ with }\quad\rho \iota=Id.
\]
Let $M_p:=M_{p+1}\oplus V^*_p$ where $V^n_p:=H^n(Q_p)$. 
Define $d:V^n_p\to M_{p+1}^{n+1}$ by $d:=\pi_1 \iota$ and 
$g_{p}:V^n_p\to F^pA^n$ by $g_p:=\pi_2 \iota$, where
$\pi_1:Q_p\to M_{p+1}^{n+1}$ and $\pi_2:Q_p\to F^pA^n$
denote the projections to the first and second components respectively. Note that $\pi_1D+d\pi_1=0$ and $d\pi_2-\pi_2D=g_{p+1}\pi_1$. These maps extend the differential on $M_{p+1}$ and the map $g_{p+1}$ respectively. Indeed, on elements of $V^*_p$ we have \[dd=d\pi_1\iota=-\pi_1D\iota=0\quad\text{ and }\quad
dg_p=d\pi_2\iota=g_{p+1}\pi_1\iota+\pi_2D\iota=g_{p+1}\pi_1\iota=g_{p+1}d,                                                                                                                                                                                                                                                                      \]
where we used the fact that $D\iota=0$.
To define  $f_p$ and $h_p$ we split the sequence 
\[0\to F^{p+1}A^n\to F^{p}A^n\to Gr^p_FA^n\to 0.\]
The differential of $F^pA^n\cong F^{p+1}A^n\oplus Gr^p_FA^n$ may then be written as 
\[d(a',a)=(da'+\tau a,da),\text{ where } \tau:Gr^p_FA^n\to F^{p+1}A^{n+1}\text{ satisfies } d\tau+\tau d=0.\] Moreover, the chosen section $s:Gr^p_FA\to F^pA$ satisfies $ds=\tau +sd.$
Consider the map 
$r:Gr^p_FA\lra Q_p$ given by $r(a):=(f_{p+1}\tau a,sa-h_{p+1}\tau a).$
Then $Dr=rd$ and so $r$ is a morphism of complexes.
We define 
$\tilde f:=(\rho -\pi_1 k) r:Gr^p_FA\to M_p$ and let
$\tilde h:=\pi_2 k r:Gr^p_FA\to Gr^p_FA[-1]$.
These maps satisfy the relations
\[d\tilde f-\tilde f d=f_{p+1}\tau\quad\text{ and }\quad d\tilde h+\tilde h d=-g_p\tilde f+s-h_{p+1}\tau.\]
Now, for $(a',a)\in F^{p+1}A\oplus Gr^p_FA\cong F^pA$ we define 
\[f_{p}(a',a):=f_{p+1}(a')+\tilde f(a)\text{ and }h_{p}(a,a'):=h_{p+1}(a')+\tilde h(a).\]
The above relations ensure that $f_p$ is a morphism of complexes and that $h_{p}$ is a homotopy from  the identity to $g_{p}f_{p}$.
We next show that $g_p$ is a quasi-isomorphism.
Consider the inclusion map $j:M_{p+1}\to M_p$ and the corresponding mapping cone 
\[C^n(j):=M_{p+1}^{n+1}\oplus M^n_p\quad;\quad D(a,b)=(-da,-a+db).\]
Elements in $Z^nC(j)$ may be written as $(a,b+v)$ where $a,b\in M_{p+1}$ and
$v\in V_p$ satisfy $a=d(b+v)$.
One then gets 
$(a,b+v)+D(b,0)=(a-db,v)=(a',v)$
and so every element in $H^n(C(j))$ has 
a representative of the form $(a',v)$, with $a'\in M_{p+1}$ and $v\in V_p$.
The map
$(Id\times g_p)^*:H^*(C(j))\lra H^*(Q_p)$ given by 
   $[(a',v)]\mapsto[(a',g_p(v))]$
is an isomorphism of graded vector spaces. Indeed,
given $[(m,a)]\in H^*(Q_p)$, there is by definition an element $v\in V_p$ such that $dv=m$ and $g_p(v)=a$. Then $[(m,a)]\mapsto [(dv,v)]$ defines an inverse map.
The maps
\[\begin{tikzcd}[row sep=1cm,column sep=1cm]
\cdots \arrow[r] & H^{n}(M_{p+1})
\arrow[r] \arrow[d,"~Id^*"]  
& H^{n}(M_{p}) 
\arrow[r] \arrow[d,"~g_p^*"] 
& H^{n+1}(C(j))
\arrow[r] \arrow[d,"~(Id\times g_p)^*"] 
& \cdots\\
\cdots \arrow[r] &  H^{n}(M_{p+1})\arrow[r] & H^{n}(F^pA)
\arrow[r] & H^{n+1}(Q_p)
\arrow[r] & \cdots
\end{tikzcd}\]
together with the five lemma
make $g_p^*$ into an isomorphism.

Lastly, let $F^pM:=\bigoplus_{q\geq p}M_q$. The maps $f_p$, $g_p$ and $h_p$ give filtered maps $f,g$ and $h$ satisfying the required properties.
Since $d(F^pM)\subseteq F^{p+1}M$, we have 
\[F^pM^n=\bigoplus_{q\geq p} Gr^p_FM^n=\bigoplus_{q\geq p} E_1^{p,n-p}(M)\cong \bigoplus_{q\geq p} E_1^{p,n-p}(A).\qedhere\]
\end{proof}

\begin{rema}
The differential $d$ of the model $M$ obtained above may be understood as a perturbation of the differential $d_1$ of $E_1$ which takes into account higher weights: if the spectral sequence degenerates at $E_r$, with $r\geq 2$, 
we may write 
\[d=d_1+\tau_2+\cdots +\tau_{r-1}\text{ where }\tau_i:M^{n}_p\to M_{p+i}^{n+1}.\]
If the spectral sequence degenerates at $E_1$ we have $d=0$, and if it degenerates at $E_2$ we have $d=d_1$. 
Likewise, the triple $(f,g,h)$ may be understood as a perturbation of a triple $(\tilde f,\tilde g,\tilde h)$ obtained after applying a bigraded version of Kadeishvili's theory to the complex $(E_0^{*,*}(A),d_0)$. 
These perturbations may be non-trivial even when degeneration occurs at the first page.
\end{rema}

\begin{exam}
When $F$ is the trivial filtration $0=F^{1}A\subset F^{0}A=A$,
 we have $E_{1}^{0,n}=H^n(A)$ and $E_1^{p,n-p}=0$ for all $p\neq 0$ and
$M$ inherits the trivial filtration. In this case, the proof of Theorem \ref{Kadeishvili1filtrat}
reduces verbatim to the proof of Theorem \ref{Kadeclassic}.
\end{exam}

\section{Filtered minimal $A_\infty$-models}
In this section we endow any stage of the spectral sequence associated to a filtered dg-algebra with a compatible $A_\infty$-algebra structure. All results in this section are also true if we consider commutative or Lie dg-algebras instead, giving $C_\infty$- and $L_\infty$-structures respectively. Indeed, the explicit formulae for producing an $A_\infty$-structure out of a homotopy transfer diagram 
coincide with the formulae in the $C_\infty$-case (see Theorem 12 in \cite{ChGe}). In the $L_\infty$-case they are completely analogous (see for instance Theorem 10.3.9 in \cite{LV}).
Assume given a homotopy transfer diagram
\[
\begin{tikzcd}[ampersand replacement = \&]
(A,d) \arrow[r, rightarrow, shift left, "f"] \arrow[r, shift right, leftarrow, "g" swap] \arrow[loop left, "h"] \& (B,d)
\end{tikzcd}
\]
where $(A,d)$ is endowed with a product $\mu:A\otimes A\rightarrow A$ making $(A,d,\mu)$ into a dg-algebra.
Following Theorem 5 of \cite{Markl2:06:geophys}, define the associated \textit{$\mathfrak{p}$-kernels} to be the degree $(n-2)$ linear maps
$\mathfrak{p}_n: A^{\otimes n}\rightarrow A$ , for all $n\geq 1$,
given by 
\[\mathfrak{p}_{n}:=\sum\limits_{\substack{k+\ell=n\\k,\ell\geq 1}}(-1)^{k(\ell+1)}\mu((h\circ \mathfrak{p}_k)\otimes(h\circ \mathfrak{p}_l))\]
with the convention that $h\circ \mathfrak{p}_1=Id$. 
Similarly, the associated \textit{$\mathfrak{q}$-kernels} are the degree $(1-n)$ linear maps 
$\mathfrak{q}_n:A^{\otimes n}\rightarrow A$, for $n\geq 1$,
defined as 
$$\mathfrak{q}_n:=-\mu((h\circ \mathfrak{q}_{n-1})\otimes Id)+\sum_{j=1}^{n-1}(-1)^{jn+n-j^2}\mu(gf_j\otimes(h\circ \mathfrak{q}_{n-j})).$$
with the convention that $\mathfrak{q}_1=Id$, where $$(gf)_m:=gf\circ \mathfrak{q}_m+\sum_{B(m)}(-1)^{u(r_1,\cdots,r_k)}(h\circ \mathfrak{q}_k)((gf\circ \mathfrak{q}_{r_1})\otimes \cdots \otimes (gf\circ \mathfrak{q}_{r_k})),$$
$$B(n):=\{(k,r_1,\cdots,r_k)|2\leq k \leq n, r_1,\cdots,r_k\geq 1, r_1+\cdots+r_k=n\},$$
and
$$u(r_1,\cdots,r_k):=\sum_{1\leq i<j\leq n}r_i(r_j+1).$$


\begin{rema}
The $\mathfrak{p}$-kernels and $\mathfrak{q}$-kernels are also defined when $A$ has the structure of an $A_\infty$-algebra (see Theorem 5 of \cite{Markl2:06:geophys}, see also Section 3 in \cite{Kop} for more details). For simplicity, we have chosen to restrict to the dg-algebra case.
\end{rema}

Assume now we have a filtered homotopy transfer diagram as in Definition \ref{deffilHTD} where $A$ is endowed with a multiplicative structure $\mu:A\otimes A\to A$ compatible with filtrations:
$\mu(F^pA\otimes F^qA)\subseteq F^{p+q}A$.
The following is straightforward:
\begin{lemm}\label{pqrespect} The $\mathfrak{p}$-kernels and $\mathfrak{q}$-kernels are compatible with filtrations:
\[\mathfrak{p}_n(F^{p_1}A\otimes\cdots\otimes F^{p_n}A)\subseteq F^{p_1+\cdots+p_n}A\text{ and }\mathfrak{q}_n(F^{p_1}A\otimes\cdots\otimes F^{p_n}A)\subseteq F^{p_1+\cdots+p_n}A.\]
\end{lemm}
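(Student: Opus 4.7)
The proof plan is a straightforward induction on $n$, using the fact that every piece appearing in the recursive definitions is filtered: the product $\mu$ is filtered by hypothesis, the maps $f,g$ are morphisms of filtered complexes and hence preserve filtrations, and $h$ is a filtered homotopy and so sends $F^pA$ into $F^pA$.

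For the $\mathfrak{p}$-kernels I would argue by induction on $n$. The base case $n=1$ is immediate since $\mathfrak{p}_1=Id$ by the convention $h\circ\mathfrak{p}_1=Id$. For the inductive step, fix $k+\ell=n$ with $k,\ell\geq 1$. Decompose an arbitrary pure tensor in $F^{p_1}A\otimes\cdots\otimes F^{p_n}A$ according to the split $k|\ell$. By the inductive hypothesis
\[
\mathfrak{p}_k\bigl(F^{p_1}A\otimes\cdots\otimes F^{p_k}A\bigr)\subseteq F^{p_1+\cdots+p_k}A,
\]
and similarly for $\mathfrak{p}_\ell$. Since $h$ preserves the filtration, the same containment holds for $h\circ\mathfrak{p}_k$ and $h\circ\mathfrak{p}_\ell$. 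The compatibility of $\mu$ with the filtration then gives
\[
\mu\bigl((h\circ\mathfrak{p}_k)\otimes(h\circ\mathfrak{p}_\ell)\bigr)\bigl(F^{p_1}A\otimes\cdots\otimes F^{p_n}A\bigr)\subseteq F^{p_1+\cdots+p_n}A,
\]
and summing over $k+\ell=n$ we conclude the same for $\mathfrak{p}_n$.

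For the $\mathfrak{q}$-kernels I would again induct on $n$, but one must first check that the auxiliary maps $(gf)_m$ respect filtrations. This is itself a (nested) induction on $m$: the leading term $gf\circ\mathfrak{q}_m$ is filtered because $f$, $g$ and (by the outer induction) $\mathfrak{q}_m$ are; each summand indexed by $(k,r_1,\dots,r_k)\in B(m)$ is a composition of $h\circ\mathfrak{q}_k$ with a tensor product of maps $gf\circ\mathfrak{q}_{r_i}$, each of which preserves the sum of the filtration indices by the inductive hypotheses applied to $\mathfrak{q}_{r_i}$ and $\mathfrak{q}_k$ (note $k,r_i<m\leq n$). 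Once the $(gf)_m$ are known to be filtered, the recursive formula
\[
\mathfrak{q}_n=-\mu\bigl((h\circ\mathfrak{q}_{n-1})\otimes Id\bigr)+\sum_{j=1}^{n-1}(-1)^{jn+n-j^{2}}\mu\bigl(gf_j\otimes(h\circ\mathfrak{q}_{n-j})\bigr)
\]
expresses $\mathfrak{q}_n$ as a sum of compositions of maps that each individually preserve the total filtration degree, exactly as in the $\mathfrak{p}$-case.

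The argument has no real obstacle; the only thing to be careful with is bookkeeping the nested recursion in $(gf)_m$ and making sure the induction is well-founded, which it is because every $\mathfrak{q}_r$ and $\mathfrak{q}_k$ appearing on the right-hand side has strictly smaller index than $n$. In short, the lemma reduces to the observation that $\mu$, $f$, $g$ and $h$ are each filtered, and the defining formulas are polynomials (with signs) in these data.
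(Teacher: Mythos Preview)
Your argument is correct and is exactly the intended one; the paper simply declares the lemma ``straightforward'' and gives no proof, so your induction on $n$ using that $\mu$, $f$, $g$, $h$ each preserve the filtration is precisely what is meant. One tiny point of bookkeeping: in the $(gf)_m$ term the index $k$ can equal $m$ (when all $r_i=1$), not $k<m$ as you wrote, but since $m=j\leq n-1$ in the recursion for $\mathfrak{q}_n$ this still falls under the outer inductive hypothesis and the argument goes through unchanged.
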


\begin{defi}
A \textit{filtered $A_\infty$-algebra} is an $A_\infty$-algebra $(A,d,\nu_s)$ endowed with a filtration 
$\{F^pA^n\}$ such that $d(F^pA^n)\subseteq F^{p}A^{n+1}$ and 
\[\nu_s(F^{p_1}A^{n_1}\otimes \cdots \otimes F^{p_s}A^{n_s})\subseteq F^{p_1+\cdots+p_s}A^{n_1+\cdots+n_2-s+2}.\] 
\end{defi} 

\begin{defi}
A \textit{filtered $A_\infty$-morphism} $f:A\rightarrow B$ is a morphism of $A_\infty$-algebras compatible with filtrations:
for all $s\geq 1$, the maps $f_s:A^{\otimes s}\to B$ satisfy
\[f_s(F^{p_1}A^{n_1}\otimes\cdots\otimes F^{p_s}A^{n_s})\subseteq F^{p_1+\cdots+p_s}B^{n_1+\cdots+n_s-s+1}.\]
It is said to be a \textit{filtered quasi-isomorphism} if $f_1$ is a filtered quasi-isomorphism.
\end{defi}

\begin{defi}
Let $f,g:A\rightarrow B$ be filtered $A_\infty$-morphisms.
A filtered $A_\infty$-homotopy from $f$ to $g$ is an $A_\infty$-homotopy 
$h$
compatible with filtrations: 
\[h_s(F^{p_1}A^{n_1}\otimes\cdots\otimes F^{p_s}A^{n_s})\subseteq F^{p_1+\cdots+p_s}B^{n_1+\cdots+n_s-s}\text{ for all }s\geq 1.\]
\end{defi}

\begin{prop}
Consider a filtered homotopy transfer diagram \[
\begin{tikzcd}[ampersand replacement = \&]
(A,d,F) \arrow[r, rightarrow, shift left, "f"] \arrow[r, shift right, leftarrow, "g" swap] \arrow[loop left, "h"] \& (B,d,F)
\end{tikzcd}
\]
where $(A,d,F)$ is a filtered dg-algebra. For $s\geq 2$, let
$$\nu_s:=f\circ \mathfrak{p}_s \circ g^{\otimes s}\quad,\quad\Ff_s:=f\circ \mathfrak{q}_s\quad,\quad\Gg_s:=h\circ \mathfrak{p}_s\circ g^{\otimes s}\quad\text{ and }\quad \Hh_s:=h\circ \mathfrak{q}_s.$$ Where $\mathfrak{p}_s$ and $\mathfrak{q}_s$ are the associated $\mathfrak{p}$-kernels and $\mathfrak{q}$-kernels.
Then:
\begin{enumerate}
    \item $(B,d,\nu_s,F)$ is a filtered $A_\infty$-algebra,
    \item $\Ff=(f,\Ff_s):A\rightarrow B$ 
    and $\Gg=(g,\Gg_s):B\rightarrow A$ are
    filtered $A_\infty$-morphisms,
   \item $\Hh=(h,\Hh_s)$ is a filtered $A_\infty$-homotopy
   from the identity to $\Gg\Ff$.
\end{enumerate}
\label{FHTT}
\end{prop}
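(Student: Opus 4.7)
The plan is to split the statement into its two content layers: the underlying $A_\infty$-relations and the compatibility with filtrations. The first layer is already handled by Theorem 5 of \cite{Markl2:06:geophys}: applied to the homotopy transfer diagram of the proposition (ignoring the filtration), that theorem asserts precisely that the operations $\nu_s=f\circ\mathfrak{p}_s\circ g^{\otimes s}$, $\Ff_s=f\circ\mathfrak{q}_s$, $\Gg_s=h\circ\mathfrak{p}_s\circ g^{\otimes s}$ and $\Hh_s=h\circ\mathfrak{q}_s$ assemble into an $A_\infty$-algebra structure on $B$, into $A_\infty$-morphisms $\Ff\colon A\to B$ and $\Gg\colon B\to A$, and into an $A_\infty$-homotopy $\Hh$ from the identity to $\Gg\Ff$, with the standard $A_\infty$ degree shifts dictated by the degrees of the $\mathfrak{p}$- and $\mathfrak{q}$-kernels. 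After this invocation, the only remaining content is the filtration compatibility.

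This second layer is pure bookkeeping and is essentially resolved by Lemma \ref{pqrespect}. For (1), given $b_i\in F^{p_i}B^{n_i}$, the filtration-compatible map $g$ sends each $b_i$ into $F^{p_i}A^{n_i}$; Lemma \ref{pqrespect} places $\mathfrak{p}_s(g(b_1)\otimes\cdots\otimes g(b_s))$ in $F^{p_1+\cdots+p_s}A$; and a final application of the filtered map $f$ yields $\nu_s(b_1\otimes\cdots\otimes b_s)\in F^{p_1+\cdots+p_s}B$, as required. Exactly the same three-step pattern --- filtered preprocessing via $g^{\otimes s}$ where present, then Lemma \ref{pqrespect} for the kernel, then filtered post-composition with $f$ or $h$ --- produces the filtration estimates for $\Ff_s$, $\Gg_s$ and $\Hh_s$ in (2) and (3). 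The degree shifts in the filtered definitions coincide with those already verified at the unfiltered level by Markl's theorem, so no additional degree computation is needed.

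I do not expect a genuine obstacle here; the whole proof consists of composing filtration-preserving maps. The only point that requires a modicum of care is the recursive nature of the $\mathfrak{q}$-kernels: the definition of $\mathfrak{q}_s$ involves the auxiliary quantities $(gf)_m$, which themselves involve $f$, $g$, $h$ and smaller kernels $\mathfrak{q}_r$. To apply Lemma \ref{pqrespect} one has to note that filtration weights propagate additively through each layer of this recursion, which is clear because $\mu$ respects the filtration and $f$, $g$, $h$ are all filtered. Once this inductive propagation is acknowledged, the verification of the three items reduces to the three-step pattern above and the proposition follows.
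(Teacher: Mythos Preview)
Your proposal is correct and matches the paper's own argument essentially verbatim: the paper simply invokes Lemma \ref{pqrespect} for the filtration compatibility of the kernels (together with the filtered nature of $f$, $g$, $h$) and then cites Theorem 5 of \cite{Markl2:06:geophys} for the underlying $A_\infty$-relations. Your write-up just unpacks these two steps in more detail.
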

\begin{proof}By Lemma \ref{pqrespect}, $\mathfrak{p}$-kernels and $\mathfrak{q}$-kernels are compatible with filtrations, as are $f$, $g$, and $h$ by assumption. It therefore suffices to apply the classical non-filtered proof (see Theorem 5 in \cite{Markl2:06:geophys}, see also Section 3 of \cite{Kop}).
\end{proof}
 
\begin{exam}\label{exemplet}Consider the free filtered dg-algebra 
 $A=\Lambda(a,b,c,e)$ generated by
$a,b,c\in F^0A^1$ and $e\in F^1A^1$, with 
$da=(b-e)c,\, db=de=dc=0.$
For simplicity, we will only analyze degrees $\leq 2$. We have $H^1\cong \langle [b], [c], [e]\rangle$ and
\[H^2(A)\cong \langle x:=[ac], y:=[bc]=[ec],  z:=[a(b-e)], w:=[be]\rangle,\]
with ring structure $[b]\cdot [c]=[e]\cdot [c]=y$ and $[b]\cdot [e]=w.$
The first page is
\[E_1^{0,1}(A,F)\cong \langle  [b], [c]\rangle\text{, } E_1^{1,0}(A,F)\cong \langle [e]\rangle \]
\[E_1^{0,2}(A,F)\cong \langle  x':=[ac],  z':=[ab]\rangle\text{ and } E_1^{1,1}(A,F)\cong \langle  y':=[ec], w':=[be]\rangle.\]
The multiplicative structure induced naturally on $E_1$ gives:
\[[b]\cdot[c]=0, \quad[e]\cdot [c]=y'\quad \text{ and }\quad[b]\cdot [e]=w'.\]
This spectral sequence degenerates at $E_1$. Still, products in $E_1$ differ from those of $H^*(A)$, where $[b]\cdot[c]\neq 0$.

To compute the filtered $A_\infty$-model consider the map $f:A\to E_1(A)$ given by 
$b\mapsto [b]$, $c\mapsto [c]$, $e\mapsto [e]$ and $a\mapsto 0$.
The condition $f(da)=df(a)=0$ implies  
$f(bc):=y'$. This leads to a well-defined filtered homotopy transfer diagram.
We obtain
$\nu_2([b],[c]):=f (b\cdot c)=y'$
so we recover the ``missing'' product.
\end{exam}

\begin{rema}
The above example shows that even when the spectral sequence degenerates at $E_1$, for which the differential of the model is trivial,
bidegrees are not necessarily preserved in the multiplicative structure. In general, we have:
\[\nu_s(E_1^{p_1,q_1}\otimes\cdots\otimes E_1^{p_s,q_s})\lra \bigoplus_{k\geq 0} E_1^{p_1+\cdots+p_s+k,q_1+\cdots+q_s+2-s-k}\text{ for all }s\geq 2.\]
We will see in Section \ref{SecMHT} how, in the setting of mixed Hodge theory, multiplicative structures always preserve bidegrees.
\end{rema}

Kadeishvili's theory can be summarized by saying that every dg-algebra $(A,d)$ has a \textit{minimal $A_\infty$-model}: there
is an $A_\infty$-algebra structure on $H^*(A)$, with trivial differential, together with an $A_\infty$-morphism $H^*(A)\to A$ which is a quasi-isomorphism. Moreover, such minimal $A_\infty$-model is unique up to $A_\infty$-isomorphism. We next state a filtered version of these results.

\begin{defi}
We say that a filtered $A_\infty$-algebra $(M,d,\nu_i,F)$ is \textit{filtered minimal} if 
$M$ has a bigrading with $F^pM^n=\bigoplus_{q\geq p}M^{q,n-q}$
and $d(F^pM^n)\subseteq F^{p+1}M^{n+1}$.
\end{defi}

\begin{prop}\label{miniso}
Every filtered quasi-isomorphism between filtered minimal 
$A_\infty$-algebras is an isomorphism.
\end{prop}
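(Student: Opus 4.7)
The plan is to exploit the strong consequences of filtered minimality on the associated spectral sequence. Since $d(F^pM)\subseteq F^{p+1}M$ and $F^pM^n=\bigoplus_{q\geq p}M^{q,n-q}$, the induced differential $d_0$ on $Gr^p_FM$ vanishes, hence $E_0^{p,q}(M)=E_1^{p,q}(M)=M^{p,q}$ as bigraded vector spaces, and likewise for $N$. For $x\in M^{p,q}$ we decompose $\varphi_1(x)=\sum_{k\geq 0}\varphi_1^{(k)}(x)$ with $\varphi_1^{(k)}(x)\in N^{p+k,q-k}$, using that $\varphi_1$ preserves filtrations. Under the identification above, the induced map $E_1(\varphi_1)$ is precisely the leading component $\varphi_1^{(0)}$. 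By Remark \ref{equivdefs}, the hypothesis that $\varphi$ is a filtered quasi-isomorphism gives that $E_1(\varphi_1)$ is an isomorphism, hence $\varphi_1^{(0)}\colon M^{p,q}\to N^{p,q}$ is a linear isomorphism in every bidegree.

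Next I would upgrade this to the statement that $\varphi_1\colon M\to N$ is itself a linear isomorphism, using that the filtration is bounded below. In a fixed cohomological degree $n$ the sum $M^n=\bigoplus_{p_0\leq p\leq p_1}M^{p,n-p}$ is finite. For injectivity, if $\varphi_1(x)=0$ with $x\neq 0$, let $p$ be minimal such that the component $x_p\in M^{p,n-p}$ of $x$ is nonzero; then the $N^{p,n-p}$-component of $\varphi_1(x)$ equals $\varphi_1^{(0)}(x_p)\neq 0$, a contradiction. For surjectivity, I would use downward induction on the filtration index: given $y\in F^pN^n$ written as $y=y_p+y'$ with $y_p\in N^{p,n-p}$ and $y'\in F^{p+1}N^n$, choose $x_p\in M^{p,n-p}$ with $\varphi_1^{(0)}(x_p)=y_p$, so that $y-\varphi_1(x_p)\in F^{p+1}N^n$, and apply the inductive hypothesis. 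The base case $p=p_1$ (where $F^{p_1+1}N^n=0$) is immediate. Since the preimage constructed at each stage lies in $F^pM^n$, it follows that $\varphi_1^{-1}$ also preserves filtrations.

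Once $\varphi_1$ is a filtered linear isomorphism, the standard recursive construction of $A_\infty$-inverses (see, e.g., \cite{LV}) produces an $A_\infty$-morphism $\psi\colon N\to M$ with linear part $\psi_1=\varphi_1^{-1}$, whose higher components $\psi_s$ ($s\geq 2$) are polynomial expressions in $\varphi_i$ and the earlier $\psi_j$ ($j<s$). Since $\varphi_i$ and, inductively, the $\psi_j$ preserve filtrations, so does each $\psi_s$, and $\psi$ is a filtered $A_\infty$-inverse of $\varphi$.

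The only genuine obstacle is the bigraded identification $E_1(M)\cong M$ coming from filtered minimality; after that, both the passage from $\varphi_1^{(0)}$ iso to $\varphi_1$ iso and the construction of a filtered $A_\infty$-inverse are routine filtration bookkeeping.
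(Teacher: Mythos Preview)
Your proof is correct and follows the same approach as the paper: both rely on the identification $M^{p,q}=E_0^{p,q}(M)=E_1^{p,q}(M)$ coming from filtered minimality, together with the fact that a filtered quasi-isomorphism induces an isomorphism on $E_1$. The paper's proof stops at the isomorphism $A^{p,q}\cong B^{p,q}$ induced by $E_1(f_1)$ and leaves the remaining steps implicit; you carry out explicitly the passage from ``$Gr_F(\varphi_1)$ is an isomorphism'' to ``$\varphi_1$ is a filtered linear isomorphism'' and then to ``$\varphi$ has a filtered $A_\infty$-inverse,'' which is useful bookkeeping but not a different idea.

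One small remark: your sentence ``the sum $M^n=\bigoplus_{p_0\leq p\leq p_1}M^{p,n-p}$ is finite'' is stronger than what the standing hypotheses (bounded below and exhaustive) guarantee, but your actual arguments do not use it: injectivity only needs that elements of a direct sum have finite support, and the downward induction for surjectivity only needs the base case $F^{p_1+1}N^n=0$ provided by bounded-belowness, together with exhaustiveness to cover all of $N^n$.
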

\begin{proof}Let $f:A\to B$ be a filtered quasi-isomorphism between filtered minimal 
$A_\infty$-algebras.
Since $A$ is filtered minimal we have
$A^{p,q}=E_0(A)^{p,q}=E_{1}^{p,q}(A)$ and $d_0=0$,
and similarly for $B$. Moreover, $f$ induces isomorphisms
 $E_1(A)\cong E_1(B)$. This gives $A^{p,q}=E_{1}^{p,q}(A)\cong E_{1}^{p,q}(B)=B^{p,q}$.
\end{proof}

\begin{defi}
 A \textit{filtered minimal $A_\infty$-model} of a filtered dg-algebra $(A,d,F)$ is a filtered minimal $A_\infty$-algebra $M$ together with a filtered $A_\infty$-morphism $M\to A$ which is a filtered quasi-isomorphism.
\end{defi}

The existence of filtered minimal $A_\infty$-models for filtered dg-algebras is now a consequence of Theorem \ref{Kadeishvili1filtrat} together with Proposition \ref{FHTT}.

\begin{theo}\label{r0}Every filtered dg-algebra has a filtered minimal $A_\infty$-model.
\end{theo}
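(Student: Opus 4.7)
The plan is to combine the two main tools already built: Theorem \ref{Kadeishvili1filtrat} gives a filtered homotopy transfer diagram onto a complex whose underlying graded space is isomorphic to the first page of the spectral sequence, and Proposition \ref{FHTT} lifts such a diagram (when the source is a filtered dg-algebra) to a filtered $A_\infty$-structure together with a filtered $A_\infty$-quasi-isomorphism. So the construction is essentially to compose these two inputs and then verify that the resulting $A_\infty$-algebra satisfies the filtered minimality condition.

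More precisely, starting from a filtered dg-algebra $(A,d,F)$, first I would apply Theorem \ref{Kadeishvili1filtrat} to obtain a filtered homotopy transfer diagram
\[
\begin{tikzcd}[ampersand replacement = \&]
(A,d,F) \arrow[r, rightarrow, shift left, "f"] \arrow[r, shift right, leftarrow, "g" swap] \arrow[loop left, "h"] \& (M,d,F)
\end{tikzcd}
\]
with $F^pM^n \cong \bigoplus_{q\geq p} E_1^{q,n-q}(A)$ and $d(F^pM)\subseteq F^{p+1}M$. Then I would apply Proposition \ref{FHTT} to transfer the dg-algebra multiplication on $A$ through this diagram, endowing $M$ with higher operations $\nu_s$ making $(M,d,\nu_s,F)$ into a filtered $A_\infty$-algebra, and producing a filtered $A_\infty$-morphism $\Gg=(g,\Gg_s):M\to A$.

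It remains to observe that $M$ is filtered minimal in the sense of the definition above. Setting $M^{q,n-q} := E_1^{q,n-q}(A)$, the isomorphism from Theorem \ref{Kadeishvili1filtrat} provides exactly the bigrading with $F^pM^n = \bigoplus_{q\geq p} M^{q,n-q}$, and the condition $d(F^pM^n)\subseteq F^{p+1}M^{n+1}$ is the strict shift on filtration established in the same theorem. For the morphism $\Gg$, its first component $g$ is a filtered quasi-isomorphism by Theorem \ref{Kadeishvili1filtrat}, so by definition $\Gg$ is a filtered $A_\infty$-quasi-isomorphism, exhibiting $M$ as a filtered minimal $A_\infty$-model of $A$.

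Since every ingredient is already in place, I do not anticipate a serious obstacle; the only point requiring care is bookkeeping, namely checking that the bigrading on $M$ coming from the spectral sequence identification is compatible with the one implicit in the minimality definition, and that the differential $d$ produced by the inductive construction in Theorem \ref{Kadeishvili1filtrat} is the $A_\infty$-differential $\nu_1$ obtained from Proposition \ref{FHTT}, which is automatic because $\mathfrak{p}_1 = \mathrm{Id}$ and $\nu_1 = d$ by construction.
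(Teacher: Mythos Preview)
Your proposal is correct and matches the paper's approach exactly: the paper states just before the theorem that the existence of filtered minimal $A_\infty$-models ``is now a consequence of Theorem \ref{Kadeishvili1filtrat} together with Proposition \ref{FHTT},'' which is precisely what you carry out, with the added verification that the bigrading and differential on $M$ satisfy the filtered minimality condition.
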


\begin{rema}
There is also a filtered version of minimal models in the rational homotopy context, 
due to Halperin and Tanr\'{e} \cite{HT}. They show that, given a filtered commutative dg-algebra $(A,d,F)$, one may take a Sullivan minimal bigraded model $\rho:(M,d)\to (E_0(A),d_0)$ and find perturbations $\tau$ and $\rho'$ of the differential on $M$ and of $\rho$ such that 
$\rho+\rho':(M,d+\tau)\lra (A,d)$
is a Sullivan minimal model compatible with filtrations and inducing an isomorphism at $E_1$. The filtration on $M$ is given by the column filtration.
\end{rema}

To obtain filtered minimal $A_\infty$-models on any stage of the associated spectral sequence we will relax the compatibility conditions of the filtrations with respect to higher operations as follows.

\begin{defi}\label{defErmin}
Let $r\geq 0$.
An \textit{$E_r$-minimal $A_\infty$-model} of a filtered dg-algebra $(A,d,F)$ is an $A_\infty$-algebra $(M,d,\nu_i)$
 with a bigrading $M=\bigoplus M^{p,q}$ such that
 \[d(M^{p,q})\subseteq \bigoplus_{k\geq 0}M^{p+r+k+1,q-r-k}\quad \text{ and for all }s\geq 2,\]
 \[\nu_s(M^{p_1,q_1}\otimes\cdots\otimes M^{p_s,q_s})\subseteq \bigoplus_{k\geq 0}M^{p_1+\cdots+p_s+(2-s)r+k,q_1+\cdots+q_s+(2-s)(1-r)-k},\]
 together with a morphism $f:M\to A$ of $A_\infty$-algebras satisfying
 \[f_s(M^{p_1,q_1}\otimes\cdots\otimes M^{p_s,q_s})\subseteq F^{p_1+\cdots+p_s+(1-s)r}A^{p_1+\cdots+p_s+q_1+\cdots+q_s-s+1}\]
 and such $E_r(f_1):E_{r}(M)\to E_{r}(A)$ is a quasi-isomorphism of complexes.
\end{defi}

\begin{rema}
 The case $r=0$ corresponds with the notion of filtered minimal $A_\infty$-model.
A proof analogous to that of Proposition \ref{miniso} shows that any two $E_r$-minimal models of a filtered dg-algebra are isomorphic.
\end{rema}

\begin{theo}\label{higherr}Let $r\geq 0$. Every filtered dg-algebra has an $E_r$-minimal $A_\infty$-model.
\end{theo}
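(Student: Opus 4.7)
The plan is to argue by induction on $r$, the base case $r=0$ being Theorem \ref{r0}. For the inductive step I start from an $E_{r-1}$-minimal $A_\infty$-model $\phi\colon (M,d_M,\nu^M_s)\to (A,d,F)$ and produce an $E_r$-minimal model by collapsing the lowest-shift part of $d_M$ via a second homotopy transfer. By $E_{r-1}$-minimality the differential decomposes along the bigrading as $d_M=d^{(r)}+d^{(r+1)}+\cdots$ with $d^{(i)}\colon M^{p,q}\to M^{p+i,q-i+1}$, and the lowest-shift component of $d_M^2=0$ forces $(d^{(r)})^2=0$. Hence $(M,d^{(r)})$ is a bigraded cochain complex; since $d_M$ raises $p$ by at least $r$, the inductive quasi-isomorphism $E_{r-1}(\phi_1)$ induces an isomorphism $M=E_r(M)\cong E_r(A,F)$ intertwining $d^{(r)}$ with $d_r^{SS}$, so its cohomology $M':=H(M,d^{(r)})$ is naturally identified with $E_{r+1}(A,F)$.

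Applying Theorem \ref{Kadeclassic} to $(M,d^{(r)})$ with bigraded splittings of its defining short exact sequences yields a homotopy transfer datum from $(M,d^{(r)})$ to $(M',0)$ with projection $\rho$ and inclusion $\iota$ of bidegree $(0,0)$ and homotopy $k$ of bidegree $(-r,r-1)$. Treating $\delta:=d_M-d^{(r)}$ as a perturbation of $d^{(r)}$, the Homological Perturbation Lemma transports this to a new homotopy transfer datum from $(M,d_M)$ to $(M',d_{M'})$ with $d_{M'}=\rho\delta\iota+\rho\delta k\delta\iota+\cdots$, together with analogous series $\iota_\infty,\rho_\infty,k_\infty$. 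The series converge in each bidegree since $\delta k$ strictly raises $p$, and a direct count of shifts---each $\delta$ contributing $\geq r+1$ to $p$, each interior $k$ contributing $-r$---shows that $d_{M'}$ raises $p$ by $\geq r+1$, which is the required $E_r$-minimality condition on the differential.

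Next I transfer the $A_\infty$-structure by applying the $A_\infty$-source version of Proposition \ref{FHTT}, available via the $\mathfrak{p}$- and $\mathfrak{q}$-kernel formalism for $A_\infty$-source algebras (cf.\ \cite{Markl2:06:geophys,Kop}), to the perturbed homotopy transfer datum. This equips $M'$ with an $A_\infty$-structure $\nu^{M'}_s$ and $\iota_\infty\colon M'\to M$ with the structure of an $A_\infty$-morphism. Setting $f:=\phi\circ\iota_\infty$, the map $E_r(f_1)=E_r(\phi_1)\circ E_r((\iota_\infty)_1)$ is a composition of $E_r$-quasi-isomorphisms (the first by the inductive hypothesis, the second by construction), hence itself a quasi-isomorphism.

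I expect the main obstacle to be the bidegree bookkeeping for the tree formulas defining $\nu^{M'}_s$ and $f_s$. Each $\nu^{M'}_s$ is a sum over planar rooted trees with $s$ leaves, internal vertices of arity $i_v$ carrying $\nu^M_{i_v}$ and internal edges carrying $k_\infty$; a tree with $n$ internal vertices has $n-1$ internal edges and satisfies $\sum_v(i_v-1)=s-1$, while the inductive $E_{r-1}$-bound contributes $(2-i_v)(r-1)$ per vertex and each internal edge contributes $-r$. Summing gives a $p$-shift of at least $(2-s)r+(s-1-n)\geq(2-s)r$, minimised on binary trees, plus the nonnegative excess $k$ in Definition \ref{defErmin} that accounts for the higher components $d^{(r+1)},d^{(r+2)},\dots$ and for the perturbative corrections in $\iota_\infty,\rho_\infty,k_\infty$. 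An analogous count over the trees assembling $f_s$ yields the bound $(1-s)r$.
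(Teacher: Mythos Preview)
Your argument is correct, but the paper's proof takes a very different and considerably shorter route. Rather than performing a second transfer, the paper reduces the inductive step to a change of filtration: one observes that $(A,d,\Dec F)$ is again a filtered dg-algebra and that $E_s^{p,n-p}(A,\Dec F)\cong E_{s+1}^{p+n,-p}(A,F)$ for $s>0$; applying the induction hypothesis yields an $E_{r-1}$-minimal model $\widetilde M\to (A,\Dec F)$, and the reindexing $M^{p,n-p}:=\widetilde M^{p-n,2n-p}$ turns it into an $E_r$-minimal model for $(A,F)$. The only work is checking that the shifts in Definition~\ref{defErmin} transform correctly under this reindexing, and no new transfer, perturbation, or tree combinatorics is required.

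Your approach---splitting off $d^{(r)}$, running the Homological Perturbation Lemma, and transferring the $A_\infty$-structure along the perturbed contraction---is more explicitly constructive: one sees directly that the resulting model sits on $E_{r+1}(A,F)$ with its transferred operations, and the tree bookkeeping makes the bounds in Definition~\ref{defErmin} visible. The cost is that you must invoke the $A_\infty$-\emph{source} version of the Homotopy Transfer Theorem (since $M$ is already only an $A_\infty$-algebra), which the paper deliberately sidesteps; see the remark following the definition of the $\mathfrak p$- and $\mathfrak q$-kernels. The d\'{e}calage argument, by contrast, never leaves the dg-algebra source setting and reuses Theorem~\ref{r0} verbatim at each step.
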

\begin{proof}
The case $r=0$ is Theorem \ref{r0}.
Assume it is proven for all $s<r$.
Consider the d\'{e}calage filtration
$\Dec F^pA^n:=\{x\in F^{p+n}A^n; dx\in F^{p+n+1}\}=Z_1^{p+n,-p}.$
Then the triple $(A,d,\Dec F)$ is a filtered dg-algebra and by Proposition I.3.4 of \cite{DeHII} we have isomorphisms
$E_s^{p,n-p}(A,\Dec F)\cong E_{s+1}^{p+n,-p}(A,F)$ for every $s>0$.
By induction hypothesis we may take an $E_{r-1}$-minimal $A_\infty$-model of $(A,d,\Dec F)$, so there is an $A_\infty$-morphism 
$\psi:(M,d,\nu_i)\lra (A,d,\mu)$
with a bigrading $M=\bigoplus \widetilde M^{p,q}$ satisfying the conditions of Definition \ref{defErmin} for $(r-1)$ and the filtration $\Dec F$ of $A$.
Consider the shifted bigrading
$M^{p,n-p}:=\widetilde M^{p-n,2n-p}.$
There are isomorphisms
$E_{s+1}^{p+n,-p}(M)\cong E_s^{p,n-p}(\widetilde M).$
One verifies that this new bigrading satisfies the conditions of Definition \ref{defErmin} for $r$
and the filtration $F$ of $A$.
\end{proof}

\begin{rema}
 Assume that the spectral sequence of a filtered dg-algebra $(A,d,F)$ degenerates at $E_{r+1}$, for some $r\geq 0$.
 Then the $E_r$-minimal $A_\infty$-model has $d=0$ and since $E_{r+1}(A)\cong H^*(A)$, this gives an $A_\infty$-structure on cohomology satisfying
 \[\nu_s(E_{r+1}^{p_1,*}\otimes\cdots\otimes E_{r+1}^{p_s,*})\subseteq \bigoplus_{k\geq 0}E_{r+1}^{p_1+\cdots+p_s+(2-s)r+k,*}
 \quad\text{ for all }s\geq 2.
 \]

\end{rema}

\section{Complex manifolds}
The complex de Rham
algebra of every complex manifold $X$ admits a bigrading
\[\Aa^k:=\Aa_{dR}^k(X)\otimes_\RR\CC=\bigoplus_{p+q=k} \Aa^{p,q}\] by forms of type $(p,q)$
and the exterior differential decomposes as 
$d=\delb+\del$
where $\delb$ has bidegree $(0,1)$ and $\del$ is its complex conjugate, of bidegree $(1,0)$. The \textit{Hodge filtration} is the decreasing filtration of $\Aa^*$ defined by the first degree:
$$F^p\Aa^k:=\bigoplus_{q\geq p} \Aa^{q,k-q}.$$
The \textit{Fr\"{o}licher spectral sequence} is the
spectral sequence associated to the Hodge filtration. Its $E_0$-page is the Dolbeault algebra 
$(E_0^{*,*},d_0)=(\Aa^{*,*},\overline{\partial})$
and its $E_1$-page is isomorphic to Dolbeault cohomology 
\[E_1^{p,q}(X)\cong H_{\overline{\partial}}^{p,q}(X):={\Ker(\delb)\over\Img(\delb)}|_{(p,q)}.\]
This spectral sequence converges to complex de Rham cohomology $H^*_{dR}(X)\otimes_\RR\CC$.

Note that $H_\delb^{*,*}(X)$ naturally inherits a ring structure and, in fact, via a bigraded homotopy transfer 
diagram
\[
\begin{tikzcd}[ampersand replacement = \&]
(\Aa^{*,*},\delb) \arrow[r, rightarrow, shift left, "f"] \arrow[r, shift right, leftarrow, "g" swap] \arrow[loop left, "h"] \& (H_\delb^{*,*}(X),0)
\end{tikzcd}.
\]
it has an $A_\infty$-structure which we call the \textit{Dolbeault $A_\infty$-structure}. This structure is purely geometric and its associated operations satisfy 
\[\nu_s^{\mathrm{Dol}}:H_\delb^{p_1,q_1}(X)\otimes\cdots\otimes H_\delb^{p_s,q_s}(X)\lra H_\delb^{p_1+\cdots+p_s,q_1+\cdots+q_s+2-s}(X).\]
On the other hand, 
Theorem \ref{r0} gives a filtered $A_\infty$-structure on the same bigraded vector space $H_\delb^{*,*}(X)$, which is faithful to the topological information and satisfies 
\[\nu_s^{\mathrm{HdR}}:H_\delb^{p_1,q_1}(X)\otimes\cdots\otimes H_\delb^{p_s,q_s}(X)\lra\bigoplus_{k\geq 0} H_\delb^{p_1+\cdots+p_s+k,q_1+\cdots+q_s+2-s-k}(X).\]
We call this the \textit{Hodge-de Rham $A_\infty$-structure}.
These two structures may differ substantially, as we exhibit in the following examples.

\begin{exam}Associated to any compact self-dual 4-manifold $M$ there is a compact complex three-dimensional manifold $Tw(M)$, known as its twistor space (see for instance \cite{AHS}). 
The twistor space $Z=Tw(\TT^4)$ of the 4-torus $\mathbb{T}^4$ is homeomorphic to $\mathbb{T}^4\times S^2$ and inherits a complex non-K\"{a}hler structure with no holomorphic forms, so $H_\delb^{1,0}(Z)=0$. On the other hand, there are four classes in $H_\delb^{0,1}(Z)$ (corresponding with the four generators of $\TT^4$)
which, by bidegree reasons, multiply to 0 (see \cite{EaSi}, which includes a detailed description of the Fr\"{o}licher spectral sequence of $Z$).
As a consequence, the Dolbeault multiplicative structure is extremely poor in comparison with the (topological) Hodge-de Rham structure, for which the product of four classes in $H_\delb^{0,1}(Z)$ recovers the top class of $\TT^4$.
Note that this space is formal, and so the higher multiplicative structures $\nu_i$ with $i\geq 3$ are trivial.
\end{exam}

\begin{exam}
 For a Stein manifold $X$, the Dolbeault algebra 
 $(A^{*,*},\delb)$ is quasi-isomorphic (as a commutative dg-algebra) to $(H^{*,0}_\delb(X),0)$. In particular, the Dolbeault $A_\infty$-structure on $H_\delb^{*,0}(X)$ has no higher operations (see Section 4 of \cite{NT}).
 Note, however, that Stein manifolds need not be formal and so, in general, 
 higher operations of the Hodge-de Rham $A_\infty$-structure on $H_\delb^{*,*}(X)$
 will be non-trivial.
\end{exam}

Let us now see a particular example of the Dolbeault and Hodge-de Rham $A_\infty$-structures on a well-known complex non-formal manifold.

\begin{exam}\label{KT}
The Kodaira-Thurston manifold is a $4$-dimensional nilmanifold given by
$
KT:=H_\ZZ \times \ZZ \setminus H \times \RR
$,
where $H$ is the $3$-dimensional Heisenberg Real Lie group and $H_\ZZ$ is the integral subgroup. 
The Lie algebra $\mathfrak{g}$ of $H \times \RR$ is spanned by $X,Y,Z,W$ with bracket $[X,Y] = -Z$.
The Chevalley-Eilenberg algebra associated to $\mathfrak{g}$ is a de Rham model of $KT$ (see for instance Theorem 3.18 of \cite{FOT}).
This is given by the free commutative dg-algebra $\Lambda(x,y,z,y)$ whose generators $x,y,z,w$ form a dual basis of $X,Y,Z,W$, with differential $dz = xy$. 
Moreover, the endomorphism of $\mathfrak{g}$ given by $J(X)=Y$ and $J(Z)=-W$ descends to a complex structure on $KT$. 
Consider the basis $A=X-iJX$ and $B=Z-iJZ$. In the dual basis, we have
\[\Aa:=\Lambda(a,\ov a,b,\ov b)\quad\text{ with }\quad db=\delb b=ia\ov a\text{ and } d\ov b=\del \ov b=-ia\ov a.\]
Here $a,b\in \Aa^{1,0}=F^1\Aa^1$ and $\ov a,\ov b\in \Aa^{0,1}\subseteq F^0\Aa^1$.
On products of 1-forms we have 
\[d(b\ov b)=ia\ov a(b+\ov b)\quad\text{ and so }\quad \delb(b\ov b)=ia\ov a\ov b\quad\text{ and }\quad \del(b\ov b)=ia\ov a b.\]
There is an inclusion of $\Aa$ into the complex de Rham algebra of $KT$ which preserves the bidegrees and induces an isomorphism in Dolbeault cohomology.
We obtain:
\[E_1^{*,*}(KT)\cong 
\arraycolsep=4pt\def\arraystretch{1.4}
 \begin{array}{|c|c|c|c|}
 \hline
[\ov a \ov b]&[b \ov a \ov b ]&[ab\ov a\ov b] \\
  \hline
 [\ov a], [\ov b]&[a\ov b], [b \ov a ]&[ab\ov a ], [ab\ov b] \\
   \hline
 1&[a]&[ab]\\ 
 \hline
\end{array}
\]
\vspace{.2cm}

This spectral sequence degenerates at the first stage.
The Dolbeault $A_\infty$-structure is easily computed. Define a bigraded homotopy transfer diagram
\[
\begin{tikzcd}[ampersand replacement = \&]
(\Aa^{*,*},\delb) \arrow[r, rightarrow, shift left, "f"] \arrow[r, shift right, leftarrow, "g" swap] \arrow[loop left, "h"] \& (H_\delb(KT),0)
\end{tikzcd}
\]
by letting $f(a\ov a):=0$, $f(a\ov a\ov b):=0$, $f(\gamma):=[\gamma]$ for all $\gamma\neq a\ov a,a\ov a\ov b$ with $\delb\gamma=0$ and $f(\gamma)=0$ otherwise, where $\gamma$ denotes a generator of $\Aa^{*,*}$ as a  complex. Let $g([\gamma]):=\gamma$ for all $[\gamma]\in E_1^{*,*}$.
Define $h$  by 
\[h(a\ov a)=-ib\quad\text{ and }\quad h(a\ov a\ov b)=-ib\ov b,\]
with $h(\gamma)=0$ for the remaining generators. With these definitions, the triple $(f,g,h)$ gives a homotopy transfer diagram for the Dolbeault algebra.

There is also a Hodge-de Rham multiplicative structure obtained via a filtered homotopy transfer diagram
\[
\begin{tikzcd}[ampersand replacement = \&]
(\Aa,d,F) \arrow[r, rightarrow, shift left, "f"] \arrow[r, shift right, leftarrow, "g" swap] \arrow[loop left, "h"] \& (H_\delb^{*,*}(\Aa),0,F)
\end{tikzcd}
\]
defined as follows:
let $g[\ov b]:=\ov b+b$ and $g[\gamma]:=\gamma$ for all $\gamma\neq \ov b$, so that $dg=0$.
Also, let  $f(a\ov a):=0$ and $f(a\ov a \ov b):=[ab\ov a]$. Let $f(\gamma)=[\gamma]$ for all $\gamma\neq a\ov a,a\ov a \ov b$, and $f(\gamma)=0$ otherwise. We have $fd=0$ and $dg=0$. Define $h$ by letting 
$h(a\ov a)=-ib$, $h(a\ov a\ov b)=-ib\ov b$ and  zero otherwise.

We obtain the following table for
ordinary products (we only show non-trivial products of degree $\leq 3$) that exhibit 
a non-trivial perturbation in the Hodge-de Rham case: 
\[\def\arraystretch{1.4}
\begin{array}{cc|r|rr}
&\nu_2(-,-)&\text{Dolbeault}&\text{Hodge-de Rham}&\\
\hline
&[a], [\ov b]&[a\ov b]&[a\ov b]+[ab]&\\
&[\ov a],[\ov b]&[\ov a\ov b]&[\ov a\ov b]-[b \ov a]&\\
&[a],[\ov a\ov b]&0&[ab\ov a]&\\
&[\ov a],[a\ov b]&0&-[ab\ov a]&\\
&[\ov b],[\ov a\ov b]&0&[b\ov a\ov b]&\\
&[\ov b],[a\ov b]&0&-[ab\ov b]&\\
\end{array}
\]
There are many non-trivial higher operations. In the lowest degree we have 
\[\nu_3([a],[a],[\ov a])= i[ab]\quad\text{ and }\quad
\nu_3([a],[\ov a],[\ov a])=-i[b\ov a]\]
and these are the same for the Dolbeault and Hodge-de Rham structures.
\end{exam}

The above example shows that even in the case when $E_1=E_\infty$, 
Hodge-de Rham products may not preserve bidegrees. 
There are particular situations for which products behave particularly well, such as in 
the compact K\"{a}hler case, for which all higher operations vanish by \cite{DGMS} and products preserve bidegrees.
In the following example, the Dolbeault and Hodge-de Rham $A_\infty$-structures agree and so products preserve bidegrees, while there are non-trivial higher operations.

%

\begin{exam}
The Iwasawa manifold $\mathbb{I}_3$ is the compact complex nilmanifold of complex dimension 3 defined by the quotient of the complex 3-dimensional Heisenberg group $H_\CC$ modulo the subgroup of matrices with entries in $\ZZ[i]$.  
As in Example \ref{KT}, one may compute the Frölicher spectral sequence using the Chevalley-Eilenberg algebra associated to the Lie algebra of 
$H_\CC$. This is given by 
\[\Aa:=\Lambda(a, b, c, \ov a,\ov b,\ov c)\quad\text{ with }\quad dc=\del c=-ab\quad \text{ and }\quad d\ov c=\delb \ov c =-\ov a\ov b.\]
Its associated spectral sequence degenerates at $E_2$ and we have 
\[E_1^{*,*}(\mathbb{I}_3)\cong 
\arraycolsep=4pt\def\arraystretch{1.8}
 \begin{array}{|c|c|c|c|c|}
 \hline
 [\ov a\ov b\ov c] & [a\ov a\ov b \ov c], [b \ov a\ov b \ov c], \textcolor{gray}{[c \ov a\ov b\ov c]} & \textcolor{gray}{[ab \ov a\ov b\ov c]}, [ac\ov a\ov b \ov c], [bc\ov a\ov b \ov c] & [abc\ov a\ov b \ov c]\\
 \hline
 \begin{array}{c}
 {[\ov a\ov c]}\\{[\ov b\ov c]}
 \end{array}
 & \begin{array}{c} 
 {[a \ov a \ov c]}, [a\ov b \ov c], [b\ov a\ov c]\\
 {[b\ov b \ov c]}, \textcolor{gray}{[c\ov a\ov c]}, \textcolor{gray}{[c\ov b\ov c]}
 \end{array}
  & 
 \begin{array}{c} 
  \textcolor{gray}{[ab\ov a\ov c]}, \textcolor{gray}{[ab\ov b\ov c]}, [ac\ov a\ov c]\\
  {[ac\ov b \ov c]}, [bc\ov b \ov c], [bc\ov a\ov c]
  \end{array}
  &
  \begin{array}{c} 
  {[abc\ov b\ov c]}\\ {[abc\ov a\ov c]}
  \end{array}
  \\
 \hline 
   \begin{array}{c} 
 {[\ov a]}\\{[\ov b] }
   \end{array}
 &  \begin{array}{c} 
 {[a\ov a]}, [a\ov b], [b\ov a]\\
 {[b\ov b]}, \textcolor{gray}{[c\ov a]}, \textcolor{gray}{[c\ov b]}
  \end{array}
 &  \begin{array}{c} 
 \textcolor{gray}{[ab\ov a]}, \textcolor{gray}{[ab\ov b]}, [ac\ov a]\\
 {[ac\ov b]}, [bc\ov a], [bc\ov b]
 \end{array}
 & \begin{array}{c} 
 {[abc\ov a]}\\{[abc\ov b]}
 \end{array}
 \\
 \hline
 1&[a],[b],\textcolor{gray}{[c]}&\textcolor{gray}{[ab]},[ac],[bc]&[abc]\\
 \hline
\end{array}
\]
where the gray classes are the ones dying at $E_2$.
A Dolbeault homotopy transfer diagram is given by the maps $(f,g,h)$ defined as follows:
define $f$ and $g$ by taking the classes (resp. representatives) shown in the table above.
We then let 
\[h(\ov a\ov b)=-\ov c\quad \text{ and }\quad h(x\ov a\ov b)=(-1)^{|x|+1}x\ov c,\quad \text{ where }\quad 
x=a,b,c,ab,ac,bc,abc.\]
Since $\del f=f\del$, $\del g=g \del$ and $\del h+h\del=0$, these maps define also a filtered homotopy transfer diagram
and so the Dolbeault and Hodge-de Rham multiplicative structures agree on $E_1$.
However, there are three different $A_\infty$-structures on $E_2$:
\begin{enumerate}
 \item [(1)] Consider the structure induced on $E_2\cong H(E_1)$ by the  Hodge-de Rham $A_\infty$-structure on $E_1$.
 \item [(2)] Consider the structure induced by a bigraded homotopy transfer for $(E_1(\mathbb{I}_3),d_1)$.
 \item [(3)] Consider the $E_2$-minimal $A_\infty$-structure given by Theorem \ref{higherr}.
\end{enumerate}
The products $\nu_2$ coincide in all three cases. For $\nu_3$ in low degrees we find:
\[\def\arraystretch{1.4}
\begin{array}{cc|r|r|rr}
&\nu_3(-,-,-)&\quad\quad(\text{S1}) & \quad\quad(\text{S2}) &\quad\quad (\text{S3})&\\
\hline
&[a], [a], [b] & 0 & [ac] & [ac] &\\
&[\ov a],[\ov a], [\ov b] & [\ov a\ov c] & 0 & [\ov a\ov c] &\\
&[a], [b], [b] & 0 & [bc] & [bc] &\\
&[\ov a],[\ov b], [\ov b] & [\ov b\ov c] & 0 & [\ov b\ov c] &\\
\end{array}
\]
and so only the third structure recovers the algebraic structure of $H^*(\mathbb{I}_3)$.
\end{exam}

\section{Mixed Hodge theory}\label{SecMHT}

The interaction of Hodge theory with dg-algebras leads to a very particular situation in which multiplicative structures behave strictly with respect to filtrations. This was first noted in \cite{DGMS} and exploited in the case of compact K\"{a}hler manifolds. In Theorem 4.4 of \cite{BuRu}, Budur and Rubi\'{o} show that the $A_\infty$-structure on the cohomology of a complex algebraic variety is strictly compatible with the weight filtration. Their proof uses the bigraded model (see \cite{Mo}, see also \cite{CG1}) which cannot be extended to study the Hodge filtration.
In this section, we propose an alternative approach which takes care of both the weight and the Hodge filtrations.
Let $\kk\subset \RR$ be a sub-field of the real numbers.

\begin{defi}
A \textit{mixed Hodge structure} on a finite dimensional $\kk$-vector space $V$ is given by an increasing
filtration $W$ of $V$, called the \textit{weight filtration},
together with a decreasing filtration $F$ on $V_\CC:=V\otimes\CC$, called the 
\textit{Hodge filtration}, such that  for all $m\geq 0$, each $\kk$-vector space $Gr_m^WV:=W_mV/W_{m-1}V$ 
is a pure Hodge structure of weight $m$ with the filtration induced by $F$ on $Gr_m^WV\otimes\CC$:
\[Gr_m^WV\otimes\CC=\bigoplus_{p+q=m}V^{p,q}\text{ where }V^{p,q}=F^p(Gr_m^WV\otimes\CC)\cap \overline{F}^q(Gr_m^WV\otimes\CC)=\overline{V}^{q,p}.\]
\end{defi}
Morphisms of mixed Hodge structures are given by morphisms $f:V\to V'$ of $\kk$-vector spaces
compatible with filtrations: \[f(W_mV)\subset W_mV'\text{ and }f_\CC(W_mF^pV_\CC)\subset W_mF^pV_\CC'.\]
By Theorem 2.3.5 of \cite{DeHII}, both maps $f$ and $f_\CC=f\otimes_\kk\CC$ are in fact strictly compatible with filtrations.

Given a mixed Hodge structure $(V,W,F)$ there is a direct sum decomposition $V_\CC=\bigoplus I^{p,q}(V)$, called \textit{Deligne splitting},
such that 
\[W_mV_\CC=\bigoplus_{p+q\leq m} I^{p,q}(V)\text{ and }F^kV_\CC=\bigoplus_{p\geq k}I^{p,q}(V).\]
Deligne's splittings are functorial for morphisms of mixed Hodge structures (see Scholie 1.2.11 of \cite{DeHII}, see also Lemma-Definition 3.4 of \cite{PS}).

The cohomology of a complex of mixed Hodge structures is again a mixed Hodge structure, since differentials are compatible with filtrations and the category of mixed Hodge structures is abelian (see Theorem 2.3.5 of \cite{DeHII}). We have:

\begin{prop}For any complex of mixed Hodge structures $(A,d,W,F)$ there is a bifiltered homotopy transfer diagram 
\[
\begin{tikzcd}[ampersand replacement = \&]
(A_\CC,d,W,F) \arrow[r, rightarrow, shift left, "f"] \arrow[r, shift right, leftarrow, "g" swap] \arrow[loop left, "h"] \& (H^*(A_\CC),0,W,F)
\end{tikzcd}
\]
where $g$ is a bifiltered quasi-isomorphism and $f,g,h$ preserve Deligne's splittings.
\label{Kadeishvilimhs}
\end{prop}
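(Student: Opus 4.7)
The strategy is to reduce to the classical Kadeishvili theorem (Theorem \ref{Kadeclassic}) piecewise, by exploiting Deligne's splitting to diagonalize both filtrations simultaneously. Since $(A,d,W,F)$ is a complex of mixed Hodge structures, the differential $d:A^n\to A^{n+1}$ is itself a morphism of MHS, and hence so is its complexification. By the functoriality of Deligne's splitting recalled in the excerpt (Scholie 1.2.11 of \cite{DeHII}), we obtain $d(I^{p,q}(A^n))\subseteq I^{p,q}(A^{n+1})$. Consequently, $A_\CC$ decomposes as a direct sum of subcomplexes $(I^{p,q}(A^*),d)$ indexed by bidegree $(p,q)$.

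Next, I would identify the cohomology of each piece with the corresponding Deligne piece of $H^*(A_\CC)$. Because the category of MHS is abelian, both the inclusion $Z^*(A)\hookrightarrow A^*$ and the projection $Z^*(A)\twoheadrightarrow H^*(A)$ are morphisms of MHS, so functoriality of Deligne's splitting on kernels and cokernels yields canonical identifications $H^n(I^{p,q}(A^*))\cong I^{p,q}(H^n(A))$. I expect this compatibility to be the main technical point of the proof: it guarantees that Deligne's splitting on $H^*(A_\CC)$ is recovered exactly from the direct sum decomposition above, so that cohomology really does split into the right pieces.

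With these preliminaries in place, I would apply Theorem \ref{Kadeclassic} separately to each subcomplex $(I^{p,q}(A^*),d)$, obtaining classical homotopy transfer data $(f_{p,q},g_{p,q},h_{p,q})$ with $f_{p,q}g_{p,q}=\mathrm{Id}$, and then set
\[ f:=\bigoplus_{p,q} f_{p,q},\qquad g:=\bigoplus_{p,q} g_{p,q},\qquad h:=\bigoplus_{p,q} h_{p,q}. \]
By construction these maps send $I^{p,q}$ to $I^{p,q}$, so they preserve Deligne's splittings; and since $W_mA_\CC=\bigoplus_{p+q\leq m}I^{p,q}(A)$ and $F^kA_\CC=\bigoplus_{p\geq k}I^{p,q}(A)$ (and analogously for $H^*(A_\CC)$), they are automatically compatible with both filtrations. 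Finally, $g$ is a bifiltered quasi-isomorphism because, in each cohomological degree, both $W_m g$ and $F^k g$ are finite direct sums of the classical quasi-isomorphisms $g_{p,q}$, hence quasi-isomorphisms themselves. The identities $f_{p,q}g_{p,q}=\mathrm{Id}$ and $\mathrm{Id}-g_{p,q}f_{p,q}=dh_{p,q}+h_{p,q}d$ assemble to $fg=\mathrm{Id}$ and $\mathrm{Id}-gf=dh+hd$, completing the homotopy transfer diagram.
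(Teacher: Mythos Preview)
Your proposal is correct and follows essentially the same approach as the paper: both use that the category of mixed Hodge structures is abelian together with functoriality of Deligne's splittings to reduce to Theorem \ref{Kadeclassic}. The only cosmetic difference is that the paper applies Deligne's splitting directly to the two short exact sequences $0\to B^n\to Z^n\to H^n\to 0$ and $0\to Z^n\to A^n\to B^{n+1}\to 0$ to obtain $I^{p,q}(A^n_\CC)\cong I^{p,q}(B^n)\oplus I^{p,q}(H^n)\oplus I^{p,q}(B^{n+1})$ and then defines $f,g,h$ as in Theorem \ref{Kadeclassic}, whereas you first decompose $A_\CC$ into the subcomplexes $I^{p,q}(A^*)$ and then run Theorem \ref{Kadeclassic} on each piece; unwinding either yields the same maps.
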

\begin{proof}
Since the category of mixed Hodge structures is abelian,
we have short exact sequences of mixed Hodge structures 
\[0\rightarrow B^n(A)\rightarrow Z^n(A)\rightarrow H^n(A)\rightarrow 0\text{ and }
0\rightarrow Z^n(A)\rightarrow A^n\rightarrow B^{n+1}(A)\to 0.
\]
Since Deligne's splittings are functorial, 
we may split the above sequences in a compatible way, and obtain isomorphisms
\[I^{p,q}(A^n_\CC)\cong I^{p,q}(B^n)\oplus I^{p,q}(H^n)\oplus I^{p,q}(B^{n+1}).\]
It now suffices to continue the proof of Theorem \ref{Kadeclassic} using these spaces to get maps $f$, $g$ and $h$ preserving Deligne's splittings.
\end{proof}

\begin{rema}
Note that in Proposition \ref{Kadeishvilimhs} we do not build a homotopy transfer diagram of complexes of mixed Hodge structures, which would be obstructed by the non-trivial group $\mathrm{Ext}^1$ of extensions (see for instance Section 3.5.1 of \cite{PS}). Indeed, the maps $f$, $g$ and $h$ are only defined over $\CC$ and not over $\kk$. However, since they preserve Deligne's splittings, they are strictly compatible with both filtrations $W$ and $F$ over $\CC$. Note as well that, using the descent of Deligne's splittings from $\CC$ to $\kk$ (see Lemma 4.4 of \cite{CiHo}), one may also build a filtered homotopy transfer diagram for $(A,d,W)$ defined over $\kk$, obtaining maps $(f',g',h')$ preserving such splittings and so strictly compatible with $W$. 
However, the data $(f,g,h)$ and $(f',g',h')$ need not be compatible, in the sense that $f'\otimes_\kk\CC\ncong f$ and similarly for $g$ and $h$.
\end{rema}

Consider now a complex of mixed Hodge structures $(A,d,W,F)$ endowed with a dg-algebra structure
$\mu:A\otimes A\lra A$
compatible with $W$ and $F$ (so that $\mu$ is a morphism of mixed Hodge structures).
We will call this a \textit{mixed Hodge dg-algebra}.
This gives both a filtered dg-algebra $(A,d,W)$ over $\kk$ and a bifiltered dg-algebra $(A_\CC,d,W,F)$ over $\CC$, with $A_\CC=A_\kk\otimes\CC$. Proposition \ref{Kadeishvilimhs} now gives:

\begin{prop}\label{MHalgstrict}
Let $(A,d,W,F)$ be a mixed Hodge dg-algebra. Then the $A_\infty$-structure on $H^*(A_\CC)$ is compatible with Deligne's splittings:  
\[\text{If }H^n(A_\CC)=\bigoplus H^{p,q}_n \text{ then }\nu_s(H^{p_1,q_1}_{n_1}\otimes\cdots\otimes H^{p_s,q_s}_{n_s})\subseteq H^{p_1+\cdots+p_s,q_1+\cdots+q_s}_{n_1+\cdots+n_s-s+2}.\]
In particular, for all $s\geq 2$, the maps $\nu_s$ are strictly compatible with $W$ and $F$:
\[\nu_s(W_pF^qH^*(A_\CC)^{\otimes s})=\Img(\nu_s)\cap W_pF^qH^*(A_\CC),\text{ where }\]
\[
W_pF^qH^*(A_\CC)^{\otimes s}=\bigoplus_{\substack{p_1+\cdots+p_s=p\\q_1+\cdots+q_s=q}}W_{p_1}F^{q_1}H^*(A_\CC)\otimes\cdots\otimes W_{p_s}F^{q_s}H^*(A_\CC).\]
\end{prop}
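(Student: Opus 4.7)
The plan is to track how Deligne's splittings propagate through the explicit homotopy transfer construction of the $A_\infty$-operations. First, I apply Proposition \ref{Kadeishvilimhs} to the underlying complex of mixed Hodge structures $(A_\CC,d,W,F)$ to obtain a homotopy transfer diagram whose maps $f$, $g$, and $h$ all preserve the Deligne decomposition $A_\CC^n = \bigoplus_{p,q} I^{p,q}(A_\CC^n)$. Since $\mu:A\otimes A\to A$ is by hypothesis a morphism of mixed Hodge structures, the functoriality of Deligne's splittings gives $\mu(I^{p_1,q_1}(A_\CC)\otimes I^{p_2,q_2}(A_\CC))\subseteq I^{p_1+p_2,\,q_1+q_2}(A_\CC)$.

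Next, I would prove by induction on $n\geq 1$ that each $\mathfrak{p}$-kernel $\mathfrak{p}_n$ sends $I^{p_1,q_1}\otimes\cdots\otimes I^{p_n,q_n}$ into $I^{p_1+\cdots+p_n,\,q_1+\cdots+q_n}$. The base case $h\circ\mathfrak{p}_1=Id$ is immediate, and the inductive step is a direct consequence of the recursion
\[\mathfrak{p}_n=\sum_{k+\ell=n}(-1)^{k(\ell+1)}\,\mu\bigl((h\circ\mathfrak{p}_k)\otimes(h\circ\mathfrak{p}_\ell)\bigr),\]
together with the fact that $\mu$ is additive on Deligne bidegrees and that $h$ preserves each $I^{p,q}$. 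Combining this with the identity $\nu_s=f\circ\mathfrak{p}_s\circ g^{\otimes s}$ from Proposition \ref{FHTT} and the bidegree preservation of $f$ and $g$ yields the claimed compatibility
\[\nu_s(H^{p_1,q_1}_{n_1}\otimes\cdots\otimes H^{p_s,q_s}_{n_s})\subseteq H^{p_1+\cdots+p_s,\,q_1+\cdots+q_s}_{n_1+\cdots+n_s-s+2}.\]

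For the strictness statement, I would use that for any mixed Hodge structure $V$ the Deligne splitting satisfies $W_mV_\CC=\bigoplus_{p+q\leq m}I^{p,q}$ and $F^kV_\CC=\bigoplus_{p\geq k}I^{p,q}$, so that any linear map preserving the $I^{p,q}$-bigrading is automatically strictly compatible with both $W$ and $F$. Writing an element of $W_pF^qH^*(A_\CC)$ in the image of $\nu_s$ in terms of its Deligne components and matching weight and Hodge indices additively over the tensor factors then exhibits it as the image of an element of $W_pF^qH^*(A_\CC)^{\otimes s}$. The only real subtlety is conceptual rather than technical: the data produced by Proposition \ref{Kadeishvilimhs} is defined only over $\CC$ and preserves Deligne splittings instead of living in the abelian category of mixed Hodge structures, but this level of information is precisely what one needs to extract strict bidegree compatibility for the higher $A_\infty$-operations.
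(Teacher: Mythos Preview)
Your proposal is correct and follows exactly the approach the paper intends: the paper simply states that Proposition~\ref{MHalgstrict} is obtained from Proposition~\ref{Kadeishvilimhs}, and you have spelled out the implicit argument---using that $f$, $g$, $h$ preserve Deligne's splittings, that $\mu$ does so by functoriality, and then propagating this through the $\mathfrak p$-kernel recursion and the formula $\nu_s=f\circ\mathfrak p_s\circ g^{\otimes s}$. The strictness deduction from bidegree preservation is likewise the intended one.
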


The natural objects arising from Deligne's theory of mixed Hodge structures on the cohomology of complex algebraic varieties are mixed Hodge complexes as well as their multiplicative versions 
in the commutative setting \cite{Mo, Na} used in rational homotopy as well as in the Lie algebra setting \cite{LCL}, important in deformation theory. Such mixed Hodge complexes are more flexible than complexes of mixed Hodge structures, although their homotopy categories are equivalent. We end this section by recalling how one can restrict to our more rigid setting of complexes of mixed Hodge structures.

\begin{defi}
A \textit{mixed Hodge complex} is given by a filtered cochain complex
$(A_{\kk},d,W)$ over $\kk$, a bifiltered cochain complex $(A_{\CC},d,W,F)$ over $\CC$, 
together with a finite string of filtered quasi-isomorphisms of filtered complexes over $\CC$
$$(A_\kk,d,W)\otimes\CC\xrightarrow{\varphi_1}(A_1,d,W)\xleftarrow{\varphi_2}\cdots\xrightarrow{\varphi_{\l-1}} (A_{\l-1},d,W)\xrightarrow{\varphi_\l} (A_\CC,d,W).$$
In addition, the following axioms are satisfied:
\begin{enumerate}
\item[($\mathrm{H}_0$)] The cohomology $H^*(A_\kk)$ is of finite type.
\item[($\mathrm{H}_1$)] For all $p$, the differential on $Gr_p^WA_\CC$ is strictly compatible with $F$.
\item[($\mathrm{H}_2$)] The filtration $F$ induced on $H^n(Gr^W_pA_{\CC})$, defines a pure Hodge structure of
weight $p+n$ on $H^n(Gr^W_pA_\QQ)$, for all $n$, and all $p$.
\end{enumerate}
Such a mixed Hodge complex is said to be \textit{connected} if the unit map $\kk\to A_\kk$ induces an isomorphism $\kk\cong H^0(A_\kk)$.
\end{defi}

\begin{theo}\label{coroDelpreserve}
Let $\Aa=\{(A_\kk,W),(A_\CC,W,F)\}$ be a connected mixed Hodge complex with a compatible dg-algebra structure. Then, the $A_\infty$-structure on $H^*(A_\CC)$ induced from $A_\CC$ preserves Deligne's splittings and all operations are strictly compatible with $W$ and $F$.
\end{theo}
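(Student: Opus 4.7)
The plan is to reduce the statement to Proposition \ref{MHalgstrict} by constructing a bifiltered homotopy transfer diagram from $A_\CC$ to $H^*(A_\CC)$ whose maps are compatible with the Deligne splittings on cohomology. First, one recalls that $H^*(A_\CC)$ carries a canonical mixed Hodge structure coming from $\Aa$ by Deligne's classical theorem: axiom $(\mathrm{H}_1)$ makes the differential strictly compatible with $F$ on each $Gr^W_p A_\CC$, and axiom $(\mathrm{H}_2)$ ensures that $H^*(Gr^W_p A_\CC)$ is pure, so that after décalage one obtains a mixed Hodge structure on $H^*(A_\CC)$ with Deligne splitting $H^*(A_\CC) = \bigoplus I^{p,q}$.

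Next, the plan is to build the desired diagram
\[
\begin{tikzcd}[ampersand replacement = \&]
(A_\CC, d, W, F) \arrow[r, rightarrow, shift left, "f"] \arrow[r, shift right, leftarrow, "g" swap] \arrow[loop left, "h"] \& (H^*(A_\CC), 0, W, F)
\end{tikzcd}
\]
by combining the proofs of Theorem \ref{Kadeishvili1filtrat} and Proposition \ref{Kadeishvilimhs} in this broader setting. The construction proceeds inductively on the $\Dec W$-weight: at each stage one splits the short exact sequences $0\to B^n\to Z^n\to H^n\to 0$ and $0\to Z^n\to A^n\to B^{n+1}\to 0$ compatibly with both filtrations. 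Axiom $(\mathrm{H}_1)$ provides the necessary strictness for these splittings to exist compatibly with $F$ on each weight-graded piece, while axiom $(\mathrm{H}_2)$ together with the functoriality of Deligne's splittings allow them to be chosen so that the resulting maps $f$, $g$, $h$ are compatible with $W$ and $F$, and so that $H^*(f)$ realizes the Deligne decomposition of $H^*(A_\CC)$.

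With such a bifiltered homotopy transfer diagram in hand, the $A_\infty$-operations $\nu_s = f\circ \mathfrak{p}_s\circ g^{\otimes s}$ produced by Proposition \ref{FHTT} are iterated compositions of splitting-compatible maps with the dg-algebra product $\mu$ (itself a morphism of mixed Hodge structures, by assumption). Hence each $\nu_s$ preserves Deligne's splittings on $H^*(A_\CC)$, and strict compatibility with $W$ and $F$ follows from the direct sum decomposition argument used in Proposition \ref{MHalgstrict}. The main obstacle lies in the inductive construction of the previous paragraph: unlike in Proposition \ref{Kadeishvilimhs}, where the source is already a complex of MHS and Deligne's splittings can be lifted directly from cohomology, here $A_\CC$ is only bifiltered, so one must combine the décalage trick from Theorem \ref{higherr} with a careful weight-by-weight choice of splittings to realize Deligne's decomposition on cohomology through maps that respect both filtrations.
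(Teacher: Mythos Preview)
Your approach has a genuine gap. You aim to build a homotopy transfer diagram from $A_\CC$ to $H^*(A_\CC)$ whose maps are ``splitting-compatible'' and then conclude that the $\mathfrak{p}$-kernels preserve Deligne's splittings. But $A_\CC$ is only a bifiltered complex, \emph{not} a complex of mixed Hodge structures, so there are no Deligne splittings on the individual $A^n_\CC$ for the homotopy $h:A_\CC\to A_\CC[-1]$ or the product $\mu$ to respect. In particular, your assertion that $\mu$ is ``a morphism of mixed Hodge structures, by assumption'' is incorrect: the compatibility hypothesis on a mixed Hodge complex only says $\mu$ is bifiltered. Even if you manage to produce bifiltered maps $f,g,h$ (this much is feasible along the lines you indicate), the resulting $\nu_s=f\circ\mathfrak{p}_s\circ g^{\otimes s}$ would only be bifiltered on $H^*(A_\CC)$, which is strictly weaker than preserving Deligne's splittings or being strictly compatible with $W$ and $F$. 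The direct-sum argument of Proposition~\ref{MHalgstrict} needs the intermediate maps $h$ and $\mu$ themselves to respect a bigrading at the cochain level, and no such bigrading exists on a general $A_\CC$.

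The paper sidesteps this obstacle by not working with $A_\CC$ directly. It invokes a Sullivan-type minimal model theorem (Theorem~3.17 of \cite{CG1}) to produce a bifiltered quasi-isomorphism $(M_\CC,d,W,F)\to (A_\CC,d,W,F)$ of dg-algebras such that $(M,d,\Dec W,F)$ is a genuine \emph{mixed Hodge dg-algebra}: each $M^n$ carries a mixed Hodge structure and both $d$ and $\mu$ are morphisms of MHS. Proposition~\ref{MHalgstrict} then applies to $M$ verbatim, and the d\'{e}calage identity $\Dec W_pH^n(A_\CC)=W_{p-n}H^n(A_\CC)$ transfers strictness from $\Dec W$ back to $W$. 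The multiplicative minimal model is precisely the missing ingredient: it supplies a cochain-level Deligne bigrading that the $\mathfrak{p}$-kernels can honestly preserve, something your inductive bifiltered construction on $A_\CC$ cannot provide.
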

\begin{proof}
By Theorem 3.17 of \cite{CG1}, there is:
\begin{enumerate}[(i)]
 \item a filtered quasi-isomorphism
$f_\kk:(M_\kk,d,W)\to (A_\kk,d,W)$ of dg-algebras,
\item a collection of filtered quasi-isomorphisms $f_i:(M_i,d,W)\to (A_i,d,W)$, and 
\item a bifiltered quasi-isomorphism 
$f_\CC:(M_\CC,d,W,F)\to (A_\CC,d,W,F)$
\end{enumerate}
such that 
$M_\CC=M_\kk\otimes\CC=M_i$ and the triple $(M,d,\Dec W,F)$ is a mixed Hodge dg-algebra, where $\Dec W$ denotes Deligne's d\'{e}calage filtration.
Since $(M,d,\Dec W,F)$ is a mixed Hodge dg-algebra, by Proposition \ref{MHalgstrict}, the $A_\infty$-structure on $H^*(A_\CC)$ is strictly compatible with the corresponding filtrations $\Dec W$ and $F$. It now suffices to use the identity $\Dec W_pH^n(A_\CC)=W_{p-n}H^n(A_\CC)$.
\end{proof}

\begin{rema}
The above result is also true for connected mixed Hodge complexes with a compatible Lie or commutative dg-algebra structure, giving strictness with respect to $W$ and $F$ for the corresponding $L_\infty$- and $C_\infty$-operations respectively.
Indeed, the proof of Theorem 3.17 in \cite{CG1} is an adaptation of Sullivan's inductive construction of minimal models for commutative dg-algebras to the mixed Hodge setting. The same proof adapts to Lie algebras without complications.
\end{rema}

\begin{exam}If the weight filtration of a multiplicative mixed Hodge complex is trivial 
$0=W_{-1}A^n\subseteq W_0A^n=A^n$
then $H^n(A_\kk)$ is a pure Hodge structure of weight $n$. In this case the components of its Deligne splitting
$H^n(A_\kk)=\bigoplus H^{p,q}_n$ are non-trivial only when $q=n-p$. Therefore we may write
\[H^n(A_\CC)=\bigoplus_{p+q=n}H^{p,q}\text{ with }H^{p,q}=F^p\cap \overline{F}^q\text{ and }F^pH^n(A_\CC)=\bigoplus_{q\geq p}H^{q,n-q}.\]
Then Corollary \ref{coroDelpreserve} implies that $\nu_s=0$ for $s\geq 3$ and 
$\nu_2(H^{p,q}\otimes H^{p',q'})\subseteq H^{p+p',q+q'}.$

For instance, let $X$ be a compact K\"{a}hler manifold. Then its complex de Rham algebra with the Hodge filtration 
defines a multiplicative mixed Hodge complex with trivial weight filtration, and we recover the $A_\infty$-counterpart of the Formality Theorem of \cite{DGMS} in the sense of rational homotopy.
\end{exam}

\begin{exam}
More generally, if $X$ is a complex algebraic variety and $\alpha\neq 0$ is a rational number such that the weight filtration is \textit{$\alpha$-pure}:
$Gr_p^WH^n(X;\CC)=0$ for all $p\neq\alpha n$.
Then we have $\nu_s=0$ for $s\geq 3$. 
This is an $A_\infty$-counterpart of the ``purity implies formality'' result of \cite{CiHo}. 
\end{exam}

\bibliographystyle{amsplain}
\bibliography{bibliografia}



\end{document}